\DeclareMathOperator{\FL}{FL}
\DeclareMathOperator{\TP}{TP}
\DeclareMathOperator{\rk}{rank}
\DeclareMathOperator{\SSSS}{SS}
\DeclareMathOperator{\odd}{odd}
\DeclareMathOperator{\even}{even}
\DeclareMathOperator{\im}{Im}
\DeclareMathOperator{\id}{id}
\DeclareMathOperator{\ch}{ch}
\DeclareMathOperator{\BU}{BU}
\DeclareMathOperator{\ho}{Hom}
\DeclareMathOperator{\CS}{CS}
\begin{document}
\setlength{\baselineskip}{1.4\baselineskip}
\theoremstyle{definition}
\newtheorem{defi}{Definition}
\newtheorem{remark}{Remark}
\newtheorem{coro}{Corollary}
\newtheorem{exam}{Example}
\newtheorem{thm}{Theorem}
\newtheorem{prop}{Proposition}
\newtheorem{lemma}{Lemma}
\numberwithin{equation}{section}
\newcommand{\wt}[1]{{\widetilde{#1}}}
\newcommand{\ov}[1]{{\overline{#1}}}
\newcommand{\wh}[1]{{\widehat{#1}}}
\newcommand{\poin}{Poincar$\acute{\textrm{e }}$}
\newcommand{\deff}[1]{{\bf\emph{#1}}}
\newcommand{\boo}[1]{\boldsymbol{#1}}
\newcommand{\abs}[1]{\lvert#1\rvert}
\newcommand{\norm}[1]{\lVert#1\rVert}
\newcommand{\inner}[1]{\langle#1\rangle}
\newcommand{\poisson}[1]{\{#1\}}
\newcommand{\biginner}[1]{\Big\langle#1\Big\rangle}
\newcommand{\set}[1]{\{#1\}}
\newcommand{\Bigset}[1]{\Big\{#1\Big\}}
\newcommand{\BBigset}[1]{\bigg\{#1\bigg\}}
\newcommand{\dis}[1]{$\displaystyle#1$}
\newcommand{\V}{\mathcal{V}}
\newcommand{\R}{\mathbb{R}}
\newcommand{\N}{\mathbb{N}}
\newcommand{\Z}{\mathbb{Z}}
\newcommand{\Q}{\mathbb{Q}}
\newcommand{\h}{\mathbb{H}}
\newcommand{\g}{\mathfrak{g}}
\newcommand{\C}{\mathbb{C}}
\newcommand{\RRR}{\mathscr{R}}
\newcommand{\DDD}{\mathscr{D}}
\newcommand{\so}{\mathfrak{so}}
\newcommand{\gl}{\mathfrak{gl}}
\newcommand{\LL}{\mathcal{L}}
\newcommand{\BB}{\mathcal{B}}
\newcommand{\CC}{\mathcal{C}}
\newcommand{\HH}{\mathcal{H}}
\newcommand{\G}{\mathcal{G}}
\newcommand{\sss}{\mathbb{S}}
\newcommand{\E}{\mathcal{E}}
\newcommand{\EE}{\mathscr{E}}
\newcommand{\UU}{\mathcal{U}}
\newcommand{\F}{\mathcal{F}}
\newcommand{\cdd}[1]{\[\begin{CD}#1\end{CD}\]}
\normalsize
\title{On differential characteristic classes}
\author{Man-Ho Ho}
\address{Department of Mathematics\\ Hong Kong Baptist University}
\email{homanho@hkbu.edu.hk}
\subjclass[2010]{Primary 57R20, 19L50, 53C08}
\maketitle
\nocite{*}
\begin{abstract}
In this paper we give explicit formulas of differential characteristic classes of principal
$G$-bundles with connections and prove their expected properties. In particular, we obtain
explicit formulas for differential Chern classes, differential Pontryagin classes and
differential Euler class. Furthermore, we show that the differential Chern class is the
unique natural transformation from (Simons-Sullivan) differential $K$-theory to
(Cheeger-Simons) differential characters that is compatible with curvature and characteristic
class. We also give the explicit formula for the differential Chern class on Freed-Lott
differential $K$-theory. Finally we discuss the odd differential Chern classes.
\end{abstract}
\tableofcontents
\section{Introduction}

Differential characteristic classes of principal $G$-bundles with connections are secondary
characteristic classes which refine primary characteristic classes. A famous example is given
in \cite{CS74}, where the transgression form lives in the total space. In \cite{CS85}
differential characters are defined with the motivation of defining secondary characteristic
classes living in the base space. The differential characteristic classes constructed in
\cite{CS85} uses universal bundles and universal connections. In particular, we have
differential Chern classes, differential Pontryagin classes and differential Euler class (see
also \cite{DHZ00} for using simplicial method to construct differential Chern classes). Since
a refinement of topological $K$-theory was not available at that time, differential Chern
classes were not considered as natural transformation between refinements of the
corresponding cohomology theories.

In recent years differential $K$-theory--the differential extension of topological
$K$-theory--has received extensive study by its motivation in geometry, topology and
theoretical physics. In \cite{B10} the differential Chern classes are defined on a model of
differential $K$-theory which consists of vector bundles with connections and odd forms,
where the form part has a slightly different additive structure when compared to \cite{FL10}.
The total differential Chern class defined in \cite{B10} respects direct sum, but the ring
structure of this $K$-theory is lost. In \cite{B09} the differential Chern classes is defined
as a natural transformation between the differential extensions of topological $K$-theory and
ordinary cohomology (regarded as functors) which are only assumed to satisfy the axioms of
differential cohomology given in \cite{BS10}. One of the advantages of this approach is the
independence of the construction of the differential Chern classes on a particular model of
differential extension of $K$-theory \cite{HS05, BS09, SS10, FL10} and of ordinary cohomology
\cite{B08, CS85}. Roughly speaking, the differential Chern classes in \cite{B09} are defined
by approximating the classifying space of $K^0$ by a sequence of compact manifolds satisfying
some nice properties and pulling back some universal classes. When working with a particular
model of differential $K$-theory, it would be nice to have explicit formulas for the
differential Chern classes.

In \cite{BB14} differential characters are extended to smooth spaces, and the group of
smooth singular cycles is replaced by the group of diffeomorphism classes of smooth maps
from stratifolds to smooth spaces, which is much more geometric in nature. Inspired by
\cite{BB14} we give explicit formulas for differential characteristic classes for principal
$G$-bundle with connections, where $G$ is a Lie group with finitely many components. The
construction and the proofs of the expected properties do not appeal to universal bundles
and universal connections. Moreover, we give an ``absolute" interpretation of the necessarily
closed $(2k-1)$-form $\alpha$ in the formula, which is a pullback of the transgression form
constructed in \cite{CS74}. Such an interpretation of $\alpha$ is not available for
differential characters in general.

Since all the existing models of differential $K$-theory are isomorphic by unique
isomorphisms \cite[Theorem 3.10]{BS10} and the explicit isomorphisms between different
models of even differential $K$-theory are known \cite{K08a, H12, H13a}, it suffices to
define differential Chern classes in any one of these models. We prove that the explicit
formula of differential Chern classes induces a natural transformation from Simons-Sullivan
differential $K$-theory to differential characters. We also give an explicit formula of
differential Chern classes defined on Freed-Lott differential $K$-theory, where we do not
make use of the explicit isomorphisms. Finally we discuss the odd differential Chern classes
on the odd counterpart of Simons-Sullivan differential $K$-theory.

The paper is organized as follows. Section 2 contains the all the necessary background
materials, and the main results are proved in Section 3.

\section*{Acknowledgement}

The author would like to thank Thomas Schick for the valuable suggestions and comments at
the beginning of this work, and the referee for helpful comments.

\section{Background materials}\setcounter{equation}{0}

Throughout this paper, $A$ is a proper subring of $\R$, $G$ is a Lie group with finitely many
components, and $X$ is a smooth space \cite[Definition 2.2]{BB14}.

\subsection{Geometric chains}

In this subsection we recall some notions in \cite{BB14}. For $n\in\N_0$, let $(C(X),
\partial)$ be the complex of smooth singular $n$-chains on $X$ with integral coefficients.
Denote by $Z_n(X)$ and $B_n(X)$ the subgroups of $n$-cycles and $n$-boundaries respectively.
The space of smooth $n$-forms on $X$ is denoted by $\Omega^n(X)$. A smooth singular chain
$y\in C_n(X)$ is said to be thin \cite[Definition 3.1]{BB14} if for all $\omega\in\Omega^n(X)$,
we have
$$\int_y\omega=0.$$
Denote by $S_n(X)\subseteq C_n(X)$ the subgroup of thin $n$-chains on $X$. Denote by
$[c]_{S_n}$ the equivalence class of $c\in C_n(X)$ in \dis{\frac{C_n(X)}{S_n(X)}}. Note that
$\partial S_{n+1}(X)\subseteq S_n(X)$, so we have a homomorphism
$$\frac{Z_n(X)}{\partial S_{n+1}(X)}\to\frac{C_n(X)}{S_n(X)}.$$
Denote by $[z]_{\partial S_{n+1}}$ the equivalence class of $z\in Z_n(X)$ in \dis{\frac{Z_n(X)}
{\partial S_{n+1}(X)}}.

We recall the definition and basic properties of geometric chains \cite[Chapter 4]{BB14}.
Let
$$\CC_n(X)=\set{[f:M^n\to X]|f:M^n\to X\textrm{ is a smooth map}}$$
be the abelian semigroup of diffeomorphism classes of smooth maps $f:M^n\to X$, where $M^n$
is an oriented compact $n$-dimensional regular $p$-stratifold with boundary \cite{K10}.
Elements in $\CC_n(X)$ are called geometric chains. The boundary operator $\partial:\CC_n(X)
\to\CC_{n-1}(X)$ is given by restriction to the geometric boundary. Define
\begin{displaymath}
\begin{split}
\LL_n(X)&=\set{\xi\in\CC_n(X)|\partial\xi=0},\\
\BB_n(X)&=\set{\xi\in\CC_n(X)|\exists\beta\in\CC_{n+1}(X)\mbox{ such that }\partial\beta
=\xi}.
\end{split}
\end{displaymath}
Elements in $\LL_n(X)$ are called geometric cycles and elements in $\BB_n(X)$ are
called geometric boundaries. Denote by \dis{\HH_k(X):=\frac{\LL_n(X)}{\BB_n(X)}} the
corresponding cohomology group. Define a homomorphism \dis{\psi_n:\LL_n(X)\to\frac{Z_n(X)}
{\partial S_{n+1}(X)}} by
\begin{equation}\label{eq 2.1}
\psi_n([f:M\to X])=[f_*(c)]_{\partial S_{n+1}},
\end{equation}
where $c\in Z_n(M)$ is an $n$-cycle representing the fundamental class of $M$. $\psi_n$
is independent of the choices of $c$ by \cite[Remark 3.2]{BB14}.

Note that $\HH_k(X)\cong H_k(X)$ via the map $\psi_n$ \cite[Theorem 20.1]{K10}. Henceforth
we write $[\zeta]_{\partial S_{n+1}}$ for $\psi_n(\zeta)$ for $\zeta=[f:M\to X]\in\LL_n(X)$.
Similar convention applies to elements in $\CC_n(X)$.
\begin{lemma}\label{lemma 1}\cite[Lemma 4.2]{BB14}
There exist homomorphisms $\zeta:C_{n+1}(X)\to\CC_{n+1}(X)$, $a:C_n(X)\to C_{n+1}(X)$
and $y:C_{n+1}(X)\to Z_{n+1}(X)$ such that
\begin{eqnarray}
(\partial\zeta)(c)&=&\zeta(\partial c),\nonumber\\
\big[\zeta(c)\big]_{S_{n+1}}&=&\big[c-a(\partial c)-\partial a(c+y(c))\big]_{S_{n+1}},\nonumber\\
\big[\zeta(z)\big]_{\partial S_{n+1}}&=&\big[z-\partial a(z)\big]_{\partial S_{n+1}}.\label{eq 2.2}
\end{eqnarray}
for all $c\in C_{n+1}(X)$ and all $z\in Z_{n+1}(X)$.
\end{lemma}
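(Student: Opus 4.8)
The plan is to build $\zeta$ tautologically from the stratifold structure of the standard simplex and then to produce $a$ and $y$ as explicit chain-level correction homomorphisms. First I would define $\zeta$ on a smooth singular simplex $\sigma:\Delta^{n+1}\to X$ by viewing $\Delta^{n+1}$ as an oriented compact regular $p$-stratifold with boundary and setting $\zeta(\sigma)=[\sigma:\Delta^{n+1}\to X]\in\CC_{n+1}(X)$, then extending $\Z$-linearly to $C_{n+1}(X)$. Because the boundary on $\CC_\bullet$ is restriction to the geometric boundary, the chain-map identity $(\partial\zeta)(c)=\zeta(\partial c)$ reduces to matching the induced (outward-normal-first) orientations of the codimension-one faces of $\Delta^{n+1}$ with the alternating signs of $\partial\sigma=\sum_i(-1)^i\,\sigma\circ\delta_i$; this is the usual orientation bookkeeping for the boundary of a simplex, together with the additivity of geometric chains over the decomposition of $\partial\Delta^{n+1}$ into its faces.

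For the other two identities I would compare the two passages between singular and geometric chains. Re-expressing a geometric chain through the pushforward of a fundamental chain of its source stratifold (the convention fixed before the statement) defines a homomorphism $R:C_{n+1}(X)\to C_{n+1}(X)/S_{n+1}(X)$ by $R(c)=[\zeta(c)]_{S_{n+1}}$. On a single simplex one may take the fundamental chain of the stratifold $\Delta^{n+1}$ to be $\sigma$ itself, so $R$ agrees with the identity up to the discrepancy created by the fact that $\zeta(c)$ is only a chain, not a cycle. The content of the formulas is then that $R$ is chain homotopic to the identity modulo thin chains: I would construct $a:C_n(X)\to C_{n+1}(X)$ of prism/subdivision type so that, up to sign, $\partial a+a\partial$ realizes $R-\mathrm{id}$ modulo $S$, giving the expression $c-a(\partial c)-\partial a(c)$; the extra homomorphism $y:C_{n+1}(X)\to Z_{n+1}(X)$, landing in cycles, is inserted so that its correction $\partial a(y(c))$ is a thin boundary and so that passing to a cycle $z$ collapses formula~\eqref{eq 2.2} to $[\zeta(z)]_{\partial S_{n+1}}=[z-\partial a(z)]_{\partial S_{n+1}}$. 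Verifying $\partial S_{n+1}\subseteq S_n$ ensures that all of these congruences descend to the quotients in which they are stated.

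I expect the main obstacle to be the simultaneous construction of $a$ and $y$ as honest natural homomorphisms valid on all chains at once, rather than on a chain-by-chain basis. The natural device is either the method of acyclic models---using that smooth singular chains are free on the simplices and that both theories compute the same homology ($\HH_k(X)\cong H_k(X)$ via $\psi_n$)---or an explicit induction over the skeleta, in either case keeping careful track of orientations and of which terms are thin. The delicate point throughout is that equalities which hold only after quotienting by $S_\bullet$ must be arranged to be compatible with $\partial$, so that the three displayed relations hold in the precise quotients $C_\bullet/S_\bullet$ and $Z_\bullet/\partial S_{\bullet+1}$ appearing in the statement.
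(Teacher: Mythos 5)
The paper itself offers no proof of this lemma---it is quoted verbatim from \cite[Lemma 4.2]{BB14}---so your argument has to stand on its own, and its foundational step fails. The boundary operator on $\CC_\bullet(X)$ is restriction to the geometric boundary of the source stratifold, so for your tautological $\zeta$ one has $\partial\zeta(\sigma)=[\sigma|_{\partial\Delta^{n+1}}:\partial\Delta^{n+1}\to X]$, a \emph{single} diffeomorphism class whose source is a closed $n$-dimensional stratifold (topologically $S^n$). By contrast, $\zeta(\partial\sigma)=\sum_i(-1)^i[\sigma\circ\delta_i:\Delta^n\to X]$ is a signed sum of classes whose sources are $n$-simplices, which for $n\geq 1$ are stratifolds with non-empty boundary. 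Elements of $\CC_n(X)$ are diffeomorphism classes, with sum given by disjoint union and sign by orientation reversal; there is no relation in $\CC_n(X)$ identifying a closed stratifold with a signed union of pieces glued along codimension-one faces. Nor can any such identification hold even after cancellation: a diffeomorphism preserves boundaries, and a disjoint union containing simplices has strictly more boundary components than any closed stratifold plus the same common summand. So $(\partial\zeta)(c)=\zeta(\partial c)$ is simply false for your $\zeta$, and no orientation bookkeeping repairs it. Worse, the cut-and-paste principle you would need---that subdividing a stratifold along codimension-one strata does not change its class---is false in $\CC_n(X)$ itself; it only becomes true after pushing forward fundamental chains and working modulo thin chains, which is essentially the content of the lemma you are trying to prove, so invoking it at the level of $\CC_n(X)$ is circular.

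The second half of the proposal also misreads the role of $y$: if $\partial a(y(c))$ were thin it would vanish in $[\,\cdot\,]_{S_{n+1}}$ and could simply be omitted from the displayed formula; it appears there precisely because it is not thin in general. The actual argument of \cite{BB14} takes a different route: it exploits Kreck's theorem that $\psi_n$ induces an isomorphism $\HH_n(X)\cong H_n(X)$ (every singular class is represented by a geometric cycle, and geometric cycles whose singular classes bound can be corrected by geometric boundaries up to thin chains), together with chosen splittings of the free abelian groups $B_n(X)\subseteq Z_n(X)\subseteq C_n(X)$, so that $\zeta$, $a$ and $y$ are first specified on cycles and then extended to homomorphisms on all chains; the cycle-valued homomorphism $y$ is an artifact of these splittings, not a thin correction. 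Your fallback suggestion of acyclic models is not hopeless in outline---$\CC_\bullet(\Delta^k)$ does compute the homology of a point---but to use it you must abandon the tautological $\zeta$, verify that the fundamental-chain comparison $\CC_\bullet(X)\to C_\bullet(X)/S_\bullet(X)$ is a chain map, and extract from the natural chain homotopy honest homomorphisms $a:C_n(X)\to C_{n+1}(X)$ and $y:C_{n+1}(X)\to Z_{n+1}(X)$ with exactly the stated properties; none of this is carried out. As it stands, the proposal does not prove the lemma.
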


\subsection{Cheeger-Simons differential characters}

In this subsection we recall Cheeger-Simons differential characters \cite{CS85} (see also
\cite[Chapter 5]{BB14}).

Let $k\geq 1$. A degree $k$ differential character $f$ with coefficients in $A$ is a group
homomorphism $f:Z_{k-1}(X)\to\R/A$ such that there exists a fixed $\omega_f\in\Omega^k(X)$
such that for all $c_k\in C_k(X)$,
$$f(\partial c)=\int_c\omega_f\mod A.$$
The abelian group of degree $k$ differential characters is denoted by $\wh{H}^k(X; \R/A)$.
It is easy to see that $\omega_f$ is a closed $k$-form with periods in $A$ and is uniquely
determined by $f\in\wh{H}^k(X; \R/A)$.

In the following diagram
\begin{equation}\label{eq 2.3}
\xymatrix{\scriptstyle 0 \ar[dr] & \scriptstyle & \scriptstyle & \scriptstyle & \scriptstyle 0
\\ & \scriptstyle H^{k-1}(X; \R/A) \ar[rr]^{-B} \ar[dr]^{i_1} & \scriptstyle & \scriptstyle
H^k(X; A) \ar[ur] \ar[dr]^r & \scriptstyle \\ \scriptstyle H^{k-1}(X; \R) \ar[ur]^{\alpha}
\ar[dr]_{\beta} & \scriptstyle & \scriptstyle\wh{H}^k(X; \R/A) \ar[ur]^{\delta_2}
\ar[dr]^{\delta_1} & \scriptstyle & \scriptstyle H^k(X; \R) \\ \scriptstyle & \scriptstyle
\frac{\Omega^{k-1}(X)}{\Omega^{k-1}_{A}(X)} \ar[rr]_{d} \ar[ur]^{i_2} & \scriptstyle &
\scriptstyle\Omega^k_{A}(X) \ar[ur]^s \ar[dr] & \scriptstyle \\ \scriptstyle 0 \ar[ur] &
\scriptstyle & \scriptstyle & \scriptstyle & \scriptstyle 0}
\end{equation}
the diagonal sequences are exact, and every triangle and square commutes
\cite[Theorem 1.1]{CS85}: Here $\Omega_A^k(X)$ denotes the group of closed $k$-forms on
$X$ with periods in $A$. The maps are defined as follows: $r$ is induced by
$A\hookrightarrow\R$,
$$i_1([z])=z|_{Z_{k-1}(X)},~~~i_2(\omega)=\ov{\omega}|_{Z_{k-1}(X)},~~~\delta_1(f)
=\omega_f\textrm{ and }\delta_2(f)=[c],$$
where $[c]\in H^k(X; A)$ is the unique cohomology class satisfying $r[c]=[\omega_f]$. In
literatures $\delta_1(f)$ is called the curvature of $f$, and $\delta_2(f)$ is called
the characteristic class of $f$.

The basic setup of differential characteristic classes is the following. Let $I^k(G)$ be
the ring of invariant polynomials of degree $k$ on $G$, and $w:I^k(G)\to H^{2k}(BG; \R)$
the
Weil homomorphism. Define
$$K^{2k}(G, A)=\set{(P, u)\in I^k(G)\times H^{2k}(BG; A)|w(P)=r(u)}.$$
Differential characteristic classes can be regarded as the unique natural transformation
$S:K^{2k}(G; A)\to\wh{H}^{2k}(X; \R/A)$ which makes the following diagram commutes
$$\xymatrix{ & \wh{H}^{2k}(X; \R/A) \ar[dr]^{(\delta_1, \delta_2)} & \\ K^{2k}(G; \Z)
\ar[rr]_{w\times c_A} \ar[ur]^S & & R^{2k}(X; A)}$$
where $c_A:H^{2k}(BG; A)\to H^{2k}(X; A)$ is induced by a classifying map $X\to BG$ for
a principal $G$-bundle $P\to X$ with connection $\theta$, and
$$R^k(X; A):=\set{(\omega, u)\in\Omega^k_A(X)\times H^k(X; A)|r(u)=[w]}.$$

Let $f\in\wh{H}^k(X; \R/A)$. If $c\in S_k(X)$ is a thin chain, then
$$f(\partial c)=\int_c\omega=0\mod A.$$
This property of differential characters is refereed as thin invariance
\cite[Remark 5.2]{BB14}. Note that the results in \cite{BB14} hold if $\Z$ is replaced
by $A$.

\section{Main Results}\setcounter{equation}{0}

\subsection{Differential characteristic classes}\label{3.1}

First of all we prove the uniqueness of differential characteristic classes.

\begin{prop}\label{prop 1}
Let $(P, u)\in K^{2k}(G, A)$, where $k\geq 1$. For each principal $G$-bundle $\pi:E\to X$
with a connection $\theta$, if there exists $S_{P, u}(E, \theta)\in\wh{H}^{2k}(X; \R/A)$
such that $S_{P, u}(E, \theta)$ is natural and $\delta_1(S_{P, u}(E, \theta))=P(\Omega)$,
where $\Omega$ is the curvature of $\theta$, then $S_{P, u}(E, \theta)$ is unique.
\end{prop}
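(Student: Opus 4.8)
The plan is to prove uniqueness by the standard difference argument. Suppose $S_{P,u}(E,\theta)$ and $S'_{P,u}(E,\theta)$ are two differential characters in $\wh{H}^{2k}(X;\R/A)$, both natural and both having curvature $P(\Omega)$. Consider their difference $\phi:=S_{P,u}(E,\theta)-S'_{P,u}(E,\theta)\in\wh{H}^{2k}(X;\R/A)$. Since $\delta_1$ is a homomorphism, $\delta_1(\phi)=P(\Omega)-P(\Omega)=0$, so $\phi$ has vanishing curvature. Exactness of the diagonal sequence running through $\wh{H}^{2k}(X;\R/A)$ in diagram \eqref{eq 2.3} then shows that $\phi$ lies in the image of $i_1$, i.e.\ $\phi=i_1(\kappa)$ for some flat class $\kappa\in H^{2k-1}(X;\R/A)$. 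The goal is to show $\kappa=0$, which forces $\phi=0$ and hence uniqueness.

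To pin down $\kappa$ I would exploit naturality together with a pullback-to-a-classifying-space argument. Because $\phi$ is a flat differential character, it is determined by a genuine cohomology class $\kappa\in H^{2k-1}(X;\R/A)$, and both $S$ and $S'$ are assumed natural, so $\phi$ itself is natural with respect to bundle maps covering smooth maps of base spaces. The key step is to reduce to the universal situation: pull the bundle $E\to X$ back from a classifying map $g:X\to BG$ (approximated by smooth maps into compact manifolds, in the spirit of the geometric-chain framework of \cite{BB14} recalled above), so that $\phi$ on $X$ is the pullback $g^*\phi_{BG}$ of the corresponding universal flat class. The naturality statement $\delta_1(\phi)=0$ holds universally, and on the universal base the flat class is governed by $H^{2k-1}(BG;\R/A)$.

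The main obstacle, and the technical heart of the argument, is to show the universal flat contribution actually vanishes rather than merely being flat. The mechanism is the defining compatibility with the characteristic class: both $S$ and $S'$ are required to satisfy $\delta_2(S_{P,u}(E,\theta))=c_A(u)$ (the integrality datum $u$ in the pair $(P,u)$ fixes $\delta_2$), so $\delta_2(\phi)=c_A(u)-c_A(u)=0$ as well. Thus $\phi$ has both vanishing curvature \emph{and} vanishing characteristic class; chasing diagram \eqref{eq 2.3}, the class $\kappa=i_1^{-1}(\phi)$ must lie in the image of $\alpha\circ B^{-1}$ of a class killed by $\delta_2$, and one concludes $\kappa$ comes from $H^{2k-1}(X;\R)/\!\!\im$ via $i_1\circ\alpha$. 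To kill this remaining real ambiguity I would invoke the rigidity of the universal construction: since $BG$ is rationally even in the relevant range for the polynomial $P$ (the invariant polynomials detect only the even cohomology through the Weil homomorphism $w$), the odd real cohomology contributing to $\kappa$ is forced to vanish after pulling back, so $\kappa=0$ and therefore $\phi=0$.

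I expect the delicate point to be making the ``pull back from $BG$'' step rigorous without universal connections, since the whole spirit of this paper is to avoid them; the honest route is to use the geometric-chain homomorphisms $\zeta,a,y$ of Lemma~\ref{lemma 1} to evaluate $\phi$ directly on cycles $[f:M\to X]\in\LL_{2k-1}(X)$ and to show, using thin invariance and $\delta_1(\phi)=0$, that $\phi(\partial c)=\int_c P(\Omega)=0\bmod A$ consistently forces $\phi$ to agree on all of $Z_{2k-1}(X)/\partial S_{2k}(X)$ with the zero character. That reduction from abstract exactness to an explicit evaluation on geometric cycles is where the real work lies.
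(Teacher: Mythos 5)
Your proposal and the paper's proof part ways at the very first step, and the places where your plan is vague are exactly where it has genuine gaps. The paper never forms the difference $\phi=S-S'$ and never goes anywhere near $BG$: it takes a single natural $S_{P,u}$ with $\delta_1(S_{P,u}(E,\theta))=P(\Omega)$ and \emph{computes} its value on an arbitrary cycle $z\in Z_{2k-1}(X)$. Using Lemma \ref{lemma 1} and thin invariance one gets $S_{P,u}(E,\theta)(z)=S_{P,u}(E,\theta)([\zeta(z)]_{\partial S_{2k}})+\int_{a(z)}P(\Omega)$; writing $\zeta(z)=[g:M\to X]$, naturality converts the first term into $S_{P,u}(g^*E,g^*\theta)(c)$ evaluated on a fundamental cycle $c$ of $M$. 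The decisive mechanism, which your sketch never reaches, is dimensional: since $\dim M=2k-1$ we have $H^{2k}(M;A)=0$, so $\delta_2(S_{P,u}(g^*E,g^*\theta))=0$, and exactness in (\ref{eq 2.3}) forces $S_{P,u}(g^*E,g^*\theta)=i_2(\alpha)$ for a form $\alpha$, whence the closed formula (\ref{eq 3.3}). That formula is what the paper means by uniqueness, and it is obtained with no universal bundles, no universal connections, and no appeal to the topology of $BG$.

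Concretely, your plan has three gaps. First, ``pull the bundle back from a classifying map'' is insufficient as stated: the naturality hypothesis concerns bundles \emph{with connections}, so to compare $\phi$ on $X$ with a universal flat class you need $(E,\theta)$ itself to be pulled back from a universal bundle \emph{with universal connection} (Narasimhan--Ramanan); this is precisely the machinery the paper is built to avoid, and you flag the hole without filling it. Second, you invoke $\delta_2(S_{P,u}(E,\theta))=c_A(u)$ to kill the torsion part of $\kappa$, but compatibility with the characteristic class is \emph{not} a hypothesis of Proposition \ref{prop 1}; in the paper it is a conclusion proved afterwards (Theorem \ref{thm 1}(2)) for the character defined by (\ref{eq 3.3}), so you may not assume it for an arbitrary competitor $S'$. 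With that extra assumption you are proving the uniqueness statement of \cite{CS85} (Theorem 2.2 there), which has strictly stronger hypotheses than the statement posed. Third, your fallback ``honest route'' names the right ingredients ($\zeta$, $a$, thin invariance, $\delta_1$) but stalls exactly at the crucial point: after reducing $\phi$ to its value on $[g:M\to X]$, thin invariance and flatness only tell you that $g^*\phi$ is a flat character on $M$ evaluated on the fundamental class, and nothing in your argument makes that evaluation vanish. What the paper does at this juncture is not to show a difference vanishes but to use the $\delta_2=0$/$i_2(\alpha)$ step above to write down the value of $S$ itself; without that step your reduction does not close.
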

\begin{proof}
Let $z\in Z_{2k-1}(X)$. By (\ref{eq 2.2}) we have $[\zeta(z)]_{\partial S_{2k}}=[z-\partial
a(z)]_{\partial S_{2k}}$. Then
\begin{equation}\label{eq 3.1}
S_{P, u}(E, \theta)(z)=S_{P, u}(E, \theta)([\zeta(z)]_{\partial S_{2k}})+S_{P, u}(E, \theta)
(\partial a(z)).
\end{equation}
By the definition of differential character and the assumption $\delta_1(S_{P, u}(E,
\theta))=P(\Omega)$, we have
\begin{displaymath}
\begin{split}
S_{P, u}(E, \theta)(\partial a(z))&=\int_{a(z)}\delta_1(S_{P, u}(E, \theta))\mod A\\
&=\int_{a(z)}P(\Omega)\mod A.
\end{split}
\end{displaymath}
Write $\zeta(z)=[g:M\to X]\in\LL_{2k-1}(X)$. Since $S_{P, u}(E, \theta)$ is assumed to be
natural, it follows from (\ref{eq 2.1}) that
\begin{displaymath}
\begin{split}
S_{P, u}(E, \theta)([\zeta(z)]_{\partial S_{2k}})&=S_{P, u}(E, \theta)([g:M\to X])\\
&=S_{P, u}(E, \theta)(g_*(c))\\
&=g^*S_{P, u}(E, \theta)(c)\\
&=S_{P, u}(g^*E, g^*\theta)(c),
\end{split}
\end{displaymath}
where $c\in Z_{2k-1}(M)$ is a cycle representing the fundamental class of $M$. It follows
from these two observations that (\ref{eq 3.1}) becomes
\begin{equation}\label{eq 3.2}
S_{P, u}(E, \theta)(z)=S_{P, u}(g^*E, g^*\theta)(c)+\int_{a(z)}P(\Omega)\mod A.
\end{equation}
Since $\dim(M)=2k-1$, it follows that $\delta_2(S_{P, u}(g^*E, g^*\theta))=0$. Thus there
exists \dis{\alpha\in\frac{\Omega^{2k-1}(M)}{\Omega^{2k-1}_\Z(M)}} such that
$S_{P, u}(g^*E, g^*\theta)=i_2(\alpha)$. Thus (\ref{eq 3.2}) becomes
\begin{equation}\label{eq 3.3}
S_{P, u}(E, \theta)(z)=\int_M\alpha+\int_{a(z)}P(\Omega)\mod A.
\end{equation}
Thus $S_{P, u}(E, \theta)(z)$ is uniquely determined by (\ref{eq 3.3}).
\end{proof}
We take (\ref{eq 3.3}) as the definition of $S_{P, u}(E, \theta)$.

The following proposition shows that (\ref{eq 3.3}) is independent of the choices made in
the proof of Proposition \ref{prop 1}, and it defines a differential character.

\begin{prop}\label{prop 2}
Let $(P, u)\in K^{2k}(G, A)$, where $k\geq 1$. For each principal $G$-bundle $\pi:E\to X$
with a connection $\theta$. the differential characteristic class $S_{P, u}(E, \theta)$
defined in (\ref{eq 3.3}) is independent of the choices made in Proposition \ref{prop 1},
i.e., for $z\in Z_{2k-1}(X)$, if $\zeta'(z)=[g':M\to X]\in\LL_{2k-1}(X)$ and $a'(z)\in
C_{2k}(X)$ are such that $[\zeta'(z)]_{\partial S_{2k}}=[z-\partial a'(z)]_{\partial S_{2k}}$,
and $\alpha'\in\Omega^{2k-1}(M')$ is such that $(g')^*S_{P, u}(E, \theta)=i_2(\alpha')$,
then
$$S_{P, u}(E, \theta)(z)=\int_{M'}\alpha'+\int_{a'(z)}P(\Omega)\mod A.$$
Moreover, $S_{P, u}(E, \theta)$ given by (\ref{eq 3.3}) defines a differential character
in $\wh{H}^{2k}(X; \R/A)$.
\end{prop}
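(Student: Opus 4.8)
The plan is to handle both assertions—independence of the choices and the differential-character property—through a single cobordism argument resting on the character property of the pullback of $S_{P,u}(E,\theta)$ to a geometric chain. First I would record the standing observation already used in Proposition \ref{prop 1}: for any smooth map $g\colon M\to X$ from a closed oriented $(2k-1)$-dimensional stratifold, the curvature $\delta_1(S_{P,u}(g^*E,g^*\theta))=P(g^*\Omega)$ is a $2k$-form on a $(2k-1)$-manifold and hence vanishes, so $g^*S_{P,u}(E,\theta)$ lies in $\ker\delta_2=\im i_2$ and a \emph{closed} form $\alpha$ with $i_2(\alpha)=g^*S_{P,u}(E,\theta)$ exists. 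If $\alpha''$ is a second such form for the same $g$, then $\int_z(\alpha-\alpha'')\in A$ for every $z\in Z_{2k-1}(M)$, in particular for a fundamental cycle, so $\int_M\alpha$ is already independent of the choice of $\alpha$ modulo $A$.

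For the choice of geometric cycle and of $a(z)$, I would observe that both $\zeta(z)=[g\colon M\to X]$ and $\zeta'(z)=[g'\colon M'\to X]$ represent $[z]$ under $\HH_{2k-1}(X)\cong H_{2k-1}(X)$, since $[\zeta(z)]_{\partial S_{2k}}=[z-\partial a(z)]_{\partial S_{2k}}$ and $\partial a(z)$ is a boundary. Hence there is a geometric chain $h\colon W\to X$ with $\partial W=M\sqcup\ov{M'}$, $h|_M=g$ and $h|_{M'}=g'$. Applying the character property of $S_{P,u}(h^*E,h^*\theta)$ on $W$ to a fundamental chain $c_W$ (with $\partial c_W=c_M-c_{M'}$) and restricting to the two boundary pieces gives $\int_M\alpha-\int_{M'}\alpha'\equiv\int_{h_*c_W}P(\Omega)\bmod A$. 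The remaining task is to identify $\int_{h_*c_W}P(\Omega)$ with $\int_{a'(z)-a(z)}P(\Omega)$ modulo $A$: by \eqref{eq 2.1} and \eqref{eq 2.2} one has $g_*c_M\equiv z-\partial a(z)$ and $g'_*c_{M'}\equiv z-\partial a'(z)$ modulo $\partial S_{2k}(X)$, so $\partial(h_*c_W)=g_*c_M-g'_*c_{M'}$ differs from $\partial(a'(z)-a(z))$ by the boundary of a thin chain, whence $h_*c_W=(a'(z)-a(z))+s+w$ with $s$ thin and $w\in Z_{2k}(X)$.

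Two facts now close this step: thin invariance gives $\int_sP(\Omega)=0$, and since $(P,u)\in K^{2k}(G,A)$ forces $[P(\Omega)]=r(c_A(u))$ with $c_A(u)\in H^{2k}(X;A)$, the period $\int_wP(\Omega)=\langle c_A(u),[w]\rangle$ lies in $A$ for the integral cycle $w$. Therefore $\int_{h_*c_W}P(\Omega)\equiv\int_{a'(z)-a(z)}P(\Omega)\bmod A$, and combining with the boundary identity yields $\int_M\alpha+\int_{a(z)}P(\Omega)\equiv\int_{M'}\alpha'+\int_{a'(z)}P(\Omega)\bmod A$, the asserted independence. The same computation proves the character property: for $c\in C_{2k}(X)$ I would take $z=\partial c$ and use $\zeta(\partial c)=\partial\zeta(c)$ from Lemma \ref{lemma 1}, so that the geometric cycle for $\partial c$ bounds the domain $W$ of $\zeta(c)=[h\colon W\to X]$; running the argument with $\partial c_W=c_M$ and $h_*c_W=c-a(\partial c)+s+w$ gives $\int_M\alpha\equiv\int_cP(\Omega)-\int_{a(\partial c)}P(\Omega)$, and substituting into \eqref{eq 3.3} collapses to $S_{P,u}(E,\theta)(\partial c)\equiv\int_cP(\Omega)\bmod A$. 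Finally, that $S_{P,u}(E,\theta)$ is a homomorphism follows from the additivity of $\zeta$ and $a$ together with the just-established independence, by choosing $\zeta(z_1+z_2)=\zeta(z_1)+\zeta(z_2)$ and $a(z_1+z_2)=a(z_1)+a(z_2)$.

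I expect the main obstacle to be the bookkeeping in the step $h_*c_W=(a'(z)-a(z))+s+w$: one must verify that the discrepancy between the singular chain $h_*c_W$ pushed from the bordism and the prescribed chains genuinely splits as a thin chain plus an integral cycle, since this is precisely where the hypothesis $w(P)=r(u)$ and thin invariance are consumed. Keeping track of orientations on $\partial W=M\sqcup\ov{M'}$, and the corresponding signs in $\partial c_W=c_M-c_{M'}$, is the other place where care is needed; everything else is formal manipulation inside the commutative diagram \eqref{eq 2.3}.
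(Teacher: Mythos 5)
Your argument is correct in substance and its skeleton is the same as the paper's: a geometric bordism between $\zeta(z)$ and $\zeta'(z)$, a decomposition of the resulting singular-chain discrepancy into (the prescribed chains) $+$ (a thin chain) $+$ (an integral cycle) --- this is exactly the paper's equation (\ref{eq 3.4}), and your boundary-comparison derivation of it goes through --- and then the two facts that thin chains kill form integrals and that $P(\Omega)\in\Omega^{2k}_A(X)$ has periods in $A$. The one step you package differently is the identity $\int_M\alpha-\int_{M'}\alpha'\equiv\int_{h_*c_W}P(\Omega)\bmod A$: you obtain it by applying the defining property of the differential character $S_{P,u}(h^*E,h^*\theta)$ on the bordism to a relative fundamental chain $c_W$, whereas the paper uses $H^{2k}(N;A)=0$ to write $G^*S_{P,u}(E,\theta)=i_2(\chi)$, compares $\alpha'-\alpha$ with $\chi|_{\partial N}$, and converts $\int_{\partial N}\chi$ into $\int_N G^*P(\Omega)$ by Stokes and the identity $\delta_1\circ i_2=d$ (the lower triangle of (\ref{eq 2.3})). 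These two steps are equivalent, and both --- yours and the paper's --- tacitly treat the pulled-back object on the bordism as a bona fide character with curvature $P(h^*\Omega)$, i.e.\ both operate inside the existence framework of Proposition \ref{prop 1}, so you are on the same logical footing as the paper. You in fact do slightly more than the paper in one respect: running your bordism argument with $z=\partial c$ and $\zeta(\partial c)=\partial\zeta(c)$ proves the character property (\ref{eq 3.6}) directly, where the paper defers this point to \cite{BB14}.

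One justification is wrong as stated and must be replaced. In your opening paragraph you claim that since the curvature $P(g^*\Omega)$ vanishes on the $(2k-1)$-dimensional $M$, the pullback $g^*S_{P,u}(E,\theta)$ ``lies in $\ker\delta_2=\im i_2$.'' Flatness gives membership in $\ker\delta_1=\im i_1$, not in $\ker\delta_2$; by the commutativity of (\ref{eq 2.3}) one has $\delta_2\circ i_1=-B$, so a flat character can perfectly well have a nonzero (torsion) characteristic class. The correct reason --- and the one the paper uses in Proposition \ref{prop 1} --- is dimensional: $H^{2k}(M;A)=0$ because $\dim M=2k-1$, hence $\delta_2(g^*S_{P,u}(E,\theta))=0$ automatically. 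Since the existence of $\alpha'$ is in any case part of the hypotheses of Proposition \ref{prop 2}, this slip does not propagate into the rest of your argument, but as written the implication is false and should be corrected.
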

The proof is virtually the same as \cite[Lemma 5.13]{BB14}.
\begin{proof}
Note that $[z]=[\zeta(z)]_{\partial S_{2k}}=[\zeta'(z)]_{\partial S_{2k}}$, and
therefore $\zeta'(z)-\zeta(z)=\partial\beta(z)$ for some $\beta(z)\in\CC_{2k}(X)$.
Thus
\begin{displaymath}
\begin{split}
[\partial a(z)-\partial a'(z)]_{\partial S_{2k}}&=[\zeta(z)-\zeta'(z)]_{\partial S_{2k}}\\
&=[\partial\beta(z)]_{\partial S_{2k}}\\
&=\partial[\beta(z)]_{S_{2k}}\\
\Rightarrow0&=\partial[a(z)-a'(z)-\beta(z)]_{\partial S_{2k}}.
\end{split}
\end{displaymath}
Thus there exists $w(z)\in Z_{2k}(X)$ such that
\begin{equation}\label{eq 3.4}
[a(z)-a'(z)-w(z)]_{S_{2k}}=[\beta(z)]_{S_{2k}}.
\end{equation}
Write $\beta(z)=[G:N\to X]$, where, by definition, $N$ is a $2k$-dimensional compact
oriented $p$-stratifold with boundary $\partial N=M'\sqcup\bar{M}$ with $g=G|_M$ and
$g'=G|_{M'}$. Since $H^{2k}(N; A)=0$ and $G^*S_{P, u}(E, \theta)\in\wh{H}^{2k}(N; \R/A)$,
it follows that $\delta_2(G^*S_{P, u}(E, \theta))=0$. Thus there exists
\dis{\chi\in\frac{\Omega^{2k-1}(N)}{\Omega^{2k-1}_A(N)}} such that $G^*S_{P, u}(E, \theta)
=i_2(\chi)$. Note that
\begin{eqnarray}
i_2(\alpha')-i_2(\alpha)&=&(g')^*S_{P, u}(E, \theta)-g^*S_{P, u}(E, \theta)\nonumber\\
&=&(G|_{\partial N})^*S_{P, u}(E, \theta)\nonumber\\
&=&(G^*S_{P, u}(E, \theta))|_{\partial N}\nonumber\\
&=&i_2(\chi)|_{\partial N}\nonumber\\
\Rightarrow\alpha'-\alpha&=&\chi|_{\partial N}+\eta\label{eq 3.5}
\end{eqnarray}
for some $\eta\in\Omega^{2k-1}_A(\partial N)$. Thus
\begin{displaymath}
\begin{split}
&\quad\bigg(\int_{M'}\alpha'+\int_{a'(z)}P(\Omega)\bigg)-S_{P, u}(E, \theta)(z)\mod A\\
&=\int_{M'}\alpha'+\int_{a'(z)}P(\Omega)-\int_M\alpha-\int_{a(z)}P(\Omega)\mod A\\
&=\int_{\partial N}(\alpha'-\alpha)+\int_{a'(z)-a(z)}P(\Omega)\mod A\\
&=\int_{\partial N}(\chi+\eta)+\int_{-w(z)}P(\Omega)+\int_{-[\beta(z)]_{S_{2k}}}
P(\Omega)\mod A\\
\end{split}
\end{displaymath}
by (\ref{eq 3.4}) and (\ref{eq 3.5}). Since $\eta\in\Omega^{2k-1}_A(\partial N)$ and
$P(\Omega)\in\Omega^{2k}_A(X)$, we have
\begin{displaymath}
\begin{split}
&\quad\bigg(\int_{M'}\alpha'+\int_{a'(z)}P(\Omega)\bigg)-S_{P, u}(E, \theta)(z)\\
&=\int_{\partial N}\chi+\int_{-[\beta(z)]_{S_{2k}}}P(\Omega)\mod A\\
&=\int_Nd\chi+\int_{-[\beta(z)]_{S_{2k}}}P(\Omega)\mod A\\
&=\int_NG^*P(\Omega)+\int_{-[\beta(z)]_{S_{2k}}}P(\Omega)\mod A\\
&=\int_{G_*[N]_{S_{2k}}-[\beta(z)]_{S_{2k}}}P(\Omega)\mod A\\
&=0
\end{split}
\end{displaymath}
since $[\beta(z)]=G_*[N]_{S_{2k}}$, and the third equality follows from the commutativity
of the lower triangle of (\ref{eq 2.3}).

Since $\zeta$ and $a$ in (\ref{eq 3.3}) are homomorphisms by Lemma \ref{lemma 1}, it
follows that $S_{P, u}(E, \theta):Z_{2k-1}(X)\to\R/A$ is a homomorphism.

To prove $S_{P, u}(E, \theta)$ is a differential character, we need to show that for
$z=\partial c$, where $c\in C_{2k}(X)$, we have
\begin{equation}\label{eq 3.6}
S_{P, u}(E, \theta)(\partial c)=\int_cP(\Omega)\mod A.
\end{equation}
The proof of (\ref{eq 3.6}) is essentially the same as (a) of the proof of
\cite[Theorem 5.14]{BB14}. Thus $S_{P, u}(E, \theta)\in\wh{H}^{2k}(X; \R/A)$.
\end{proof}

The following proposition shows the expected properties of differential characteristic
classes.

\begin{thm}\label{thm 1}
Let $(P, u)\in K^{2k}(G, A)$, where $k\geq 1$. For each principal $G$-bundle $\pi:E\to X$
with a connection $\theta$, we have
\begin{enumerate}
  \item $\delta_1(S_{P, u}(E, \theta))=P(\Omega)$,
  \item $\delta_2(S_{P, u}(E, \theta))=u(E)$, and
  \item if $f:Y\to X$ is any smooth map, then
        $$S_{P, u}(f^*E, f^*\theta)=f^*S_{P, u}(E, \theta).$$
\end{enumerate}
\end{thm}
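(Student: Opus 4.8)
The plan is to dispatch (1) immediately, then prove the naturality (3), and finally the characteristic-class identity (2); I treat (3) before (2) because naturality lets me move the computation of (2) onto convenient geometric cycles or onto a universal model.

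For (1), Proposition \ref{prop 2} already shows that $S_{P, u}(E, \theta)$ is a differential character satisfying \eqref{eq 3.6}, namely $S_{P, u}(E, \theta)(\partial c)=\int_c P(\Omega)\mod A$ for every $c\in C_{2k}(X)$. Since the curvature $\delta_1(f)=\omega_f$ is the \emph{unique} form with $f(\partial c)=\int_c\omega_f\mod A$ (as recorded after the definition of differential character), this forces $\delta_1(S_{P, u}(E, \theta))=P(\Omega)$.

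For (3) I would argue directly from \eqref{eq 3.3}, using the independence of choices granted by Proposition \ref{prop 2}. Given $f:Y\to X$ and $w\in Z_{2k-1}(Y)$, write $\zeta(w)=[h:N\to Y]$ and pick $a(w)\in C_{2k}(Y)$ as in \eqref{eq 2.2}, so $[\zeta(w)]_{\partial S_{2k}}=[w-\partial a(w)]_{\partial S_{2k}}$. Pushing forward by $f$ and using that $f_*$ commutes with $\partial$ and preserves thin chains, the pair $(f\circ h, f_*a(w))$ is an admissible choice for $f_*w$ on $X$, so by Proposition \ref{prop 2},
$$
(f^*S_{P, u}(E, \theta))(w)=S_{P, u}(E, \theta)(f_*w)=\int_N\alpha'+\int_{f_*a(w)}P(\Omega)\mod A,
$$
where $\alpha'$ is determined on $N$ by $S_{P, u}((f\circ h)^*E, (f\circ h)^*\theta)=i_2(\alpha')$. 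The key point, ruling out circularity, is functoriality of pullback: $(f\circ h)^*E=h^*(f^*E)$ and $(f\circ h)^*\theta=h^*(f^*\theta)$, so on $N$ the form $\alpha'$ satisfies the very equation defining the form $\alpha$ used to compute $S_{P, u}(f^*E, f^*\theta)(w)$ via \eqref{eq 3.3} with the same choice $\zeta(w)=[h:N\to Y]$. As $i_2$ is injective, $\alpha'$ and that $\alpha$ agree modulo $\Omega^{2k-1}_A(N)$, whence $\int_N\alpha'=\int_N\alpha\mod A$; and $P(f^*\Omega)=f^*P(\Omega)$ together with the change of variables $\int_{f_*a(w)}P(\Omega)=\int_{a(w)}P(f^*\Omega)$ matches the remaining terms, giving (3).

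For (2) the real reduction is immediate. Fix a classifying map $\phi:X\to BG$ for $E$, so $u(E)=\phi^*u$. By (1) the curvature of $S_{P, u}(E, \theta)$ is $P(\Omega)$, hence $r(\delta_2(S_{P, u}(E, \theta)))=[P(\Omega)]$, which by Chern--Weil equals $\phi^*w(P)$; since $w(P)=r(u)$ by the definition of $K^{2k}(G, A)$, naturality of $r$ gives $r(u(E))=\phi^*r(u)=\phi^*w(P)$, so the two classes agree after applying $r$. The genuine obstacle is to promote this to an equality in $H^{2k}(X; A)$, i.e.\ to kill the difference, which a priori lies in $\ker r=\im(-B)$. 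I would do this by a direct cochain computation in the spirit of \cite[Theorem 5.14]{BB14}: choose a real lift $\wt f:Z_{2k-1}(X)\to\R$ of $S_{P, u}(E, \theta)$, form the $A$-valued cocycle $c\mapsto\int_cP(\Omega)-\wt f(\partial c)$ representing $\delta_2(S_{P, u}(E, \theta))$, and identify its class with $u(E)$ using \eqref{eq 3.3} together with the transgression form of \cite{CS74}, whose appearance here is exactly the ``absolute'' interpretation of $\alpha$ advertised in the introduction. Alternatively, the naturality (3) reduces the identity to the universal bundle over a manifold approximation of $BG$, where $\delta_2=u$ holds by construction. I expect this integral refinement to be the only delicate step, the remainder being formal.
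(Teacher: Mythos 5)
Parts (1) and (3) of your proposal are correct and follow essentially the same route as the paper. Part (1) is the paper's argument verbatim: \eqref{eq 3.6} plus uniqueness of the curvature form of a differential character. Your part (3) --- evaluate $S_{P,u}(E,\theta)(f_*w)$ using the admissible choice $(f\circ h, f_*a(w))$, which Proposition \ref{prop 2} permits, and then identify the resulting form $\alpha'$ with the form $\alpha$ entering $S_{P,u}(f^*E,f^*\theta)(w)$ via $h^*(f^*E)=(f\circ h)^*E$ and injectivity of $i_2$ --- is exactly the computation \eqref{eq 3.7}--\eqref{eq 3.9} in the paper; if anything, your explicit explanation of why this is not circular is cleaner than the paper's write-up, which introduces its comparison form $\beta$ by an equation that reads like the very naturality statement being proved.

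The genuine gap is part (2): you do not prove it. You write down precisely the cocycle the paper uses, $u_k(c)=\int_cP(\Omega)-T(\partial c)$ with $T$ a real lift of $S_{P,u}(E,\theta)$, and you establish the real-coefficient identity $r(\delta_2(S_{P,u}(E,\theta)))=r(u(E))$ via Chern--Weil; but for the identity in $H^{2k}(X;A)$ you only name two strategies (a direct cochain computation, or reduction to a universal bundle over a manifold approximation of $BG$) and carry out neither. The paper finishes along the first path you abandoned: it checks that $u_k$ is $A$-valued (modulo $A$ it equals $S_{P,u}(E,\theta)(\partial c)-S_{P,u}(E,\theta)(\partial c)=0$ by \eqref{eq 3.6}), that $[u_k]$ is independent of the lift $T$, and then concludes from $u_k(z)=\int_zP(\Omega)$ on cycles, by the uniqueness part of the de Rham theorem together with its characterization of $\delta_2$ as the class whose image under $r$ is the de Rham class of the curvature, that $u_k$ represents $u(E)$. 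So the step you single out as ``the genuine obstacle'' --- passing from the real identity to the $A$-coefficient one, i.e.\ controlling $\ker r$ --- is exactly where the paper declares victory; your scruple about torsion is a fair criticism of the paper's terseness, but it does not substitute for an argument, and as submitted your proposal establishes only the real statement. Note also that your fallback via universal bundles runs against the stated purpose of the paper (Remark \ref{remark 1} emphasizes that these proofs avoid universal bundles and universal connections), and is not as immediate as you suggest: for the character defined by \eqref{eq 3.3}, ``$\delta_2=u$ holds by construction'' is not true by construction --- you would first have to identify this character with the Cheeger--Simons one on the approximating manifold, e.g.\ by invoking the uniqueness of Proposition \ref{prop 1}, before citing \cite{CS85}.
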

\begin{proof}~~~~~~~~~~~~~~~~~~~~~~~~
\begin{enumerate}
  \item It follows directly from (\ref{eq 3.6}).
  \item Recall that a cocycle $u_k\in Z^{2k}(X; A)$ representing $\delta_2(S_{P, u}(E,
        \theta))\in H^{2k}(X; A)$ is defined by
        $$u_k(c)=\int_cP(\Omega)-T(\partial c),$$
        where $T\in\ho(Z_{2k-1}(X), \R)$ is a lift of $S_{P, u}(E, \theta)$, i.e.,
        $S_{P, u}(E, \theta)(z)=T(z)\mod A$ for all $z\in Z_{2k-1}(X)$. \emph{A priori}
        $u_k$ is a real cocycle. Since
        \begin{displaymath}
        \begin{split}
        u_k(c)&=\int_cP(\Omega)-T(\partial c)\mod A\\
        &=S_{P, u}(E, \theta)(\partial c)-S_{P, u}(E, \theta)(\partial c)\mod A\\
        &=0\mod A,
        \end{split}
        \end{displaymath}
        $u_k$ is indeed an $A$-cocycle. Note that $u_k$ depends on the lift $T$, but
        its cohomology class does not. Since \dis{u_k(z)=\int_zP(\Omega)} for all $z\in
        Z_{2k}(X)$, it follows from the uniqueness of the de Rham theorem that $u_k$
        represents $u(E)$.
  \item The proof is similar to (c) of \cite[Theorem 5.14]{BB14}. Let $z\in Z_{2k-1}(X)$.
        By (\ref{eq 2.2}) we have $[\zeta(z)]_{\partial S_{2k}}=[z-\partial a(z)]_{\partial
        S_{2k}}$, where $\zeta(z)\in\LL_{2k-1}(X)$ and $a(z)\in C_{2k}(X)$. Write $\zeta(z)
        =[g:M\to X]$. By (\ref{eq 3.3}), we have
        \begin{equation}\label{eq 3.7}
        S_{P, u}(f^*E, f^*\theta)(z)=\int_M\alpha+\int_{a(z)}P(f^*\Omega)\mod A,
        \end{equation}
        where \dis{\alpha\in\frac{\Omega^{2k-1}(M)}{\Omega^{2k-1}_A(M)}} is the unique closed
        form such that
        \begin{equation}\label{eq 3.8}
        i_2(\alpha)=S_{P, u}(g^*f^*E, g^*f^*\theta)=S_{P, u}((f\circ g)^*E, (f\circ g)^*\theta).
        \end{equation}
        Since $f^*S_{P, u}(E, \theta)(z)=S_{P, u}(E, \theta)(f_*z)$, we compute $f_*z$.
        Define by $\zeta(f_*z):=f_*\zeta(z)=[f\circ g:M\to Y]$ and $a(f_*z):=f_*a(z)$. Then
        \begin{displaymath}
        \begin{split}
        [f_*z-\partial a(f_*z)]_{\partial S_{2k}}&=f_*[z-\partial a(z)]_{\partial S_{2k}}\\
        &=f_*[\zeta(z)]_{\partial S_{2k}}\\
        &=[f_*\zeta(z)]_{\partial S_{2k}}.
        \end{split}
        \end{displaymath}
        It follows that
        \begin{displaymath}
        \begin{split}
        &\quad f^*S_{P, u}(E, \theta)(z)\\
        &=S_{P, u}(E, \theta)(f_*z)\\
        &=S_{P, u}(E, \theta)([f_*\zeta(z)]_{\partial S_{2k}})+S_{P, u}(E, \theta)(\partial
        a(f_*(z))\\
        &=S_{P, u}(E, \theta)(f_*[\zeta(z)]_{\partial S_{2k}})+\int_{f_*a(z)}P(\Omega)\mod A\\
        &=S_{P, u}(E, \theta)(f_*(g_*(c)))+\int_{a(z)}f^*P(\Omega)\mod A.\\
        \end{split}
        \end{displaymath}
        Thus
        \begin{equation}\label{eq 3.9}
        f^*S_{P, u}(E, \theta)=(f\circ g)^*S_{P, u}(E, \theta)(c)+\int_{a(z)}P(f^*\Omega)
        \mod A.
        \end{equation}
        Since $(f\circ g)^*S_{P, u}(E, \theta)\in\wh{H}^{2k}(M; \R/A)$ and $\dim(M)=2k-1$,
        it follows one of the exact sequences of (\ref{eq 2.3}) that there exists a unique
        closed form \dis{\beta\in\frac{\Omega^{2k-1}(M)}{\Omega^{2k-1}_A(M)}} such that
        $$i_2(\beta)=S_{P, u}((f\circ g)^*E, (f\circ g)^*\theta).$$
        It follows from (\ref{eq 3.8}) that $\beta-\alpha\in\Omega^{2k-1}_A(M)$. Thus
        (\ref{eq 3.9}) becomes
        \begin{displaymath}
        \begin{split}
        f^*S_{P, u}(E, \theta)(z)&=\int_M\alpha+\int_{a(z)}P(f^*\Omega)\mod A\\
        &=S_{P, u}(f^*E, f^*\theta)(z).\qedhere
        \end{split}
        \end{displaymath}
\end{enumerate}
\end{proof}

\begin{remark}\label{remark 1}
Since the $S_{P, u}(E, \theta)$ given by (\ref{eq 3.3}) satisfies Theorem \ref{thm 1}, it
follows from Proposition \ref{prop 1} that $S_{P, u}(E, \theta)$ is indeed the unique such
differential character. In particular, this gives a proof of \cite[Theorem 2.2]{CS85} without
using universal bundles and universal connections.
\end{remark}

The following lemma gives an ''absolute" interpretation of the form $\alpha$ in (\ref{eq 3.3}).

\begin{coro}\label{coro 1}
Let $(P, u)\in K^{2k}(G, A)$, where $k\geq 1$. For each principal $G$-bundle $\pi:E\to X$
with a connection $\theta$, if
$$(\pi^*S_{P, u}(E, \theta))(z)=\int_M\alpha+\int_{a(z)}P(\pi^*\Omega)\mod\Z,$$
for $z\in Z_{2k-1}(E)$, then
$$\alpha=g^*\TP(\theta)\in\frac{\Omega^{2k-1}(M)}{\Omega^{2k-1}_\Z(M)},$$
where $[g:M\to E]=\zeta(z)$, and $\TP(\theta)\in\Omega^{2k-1}(E)$ is the transgression
form defined in \cite[\S3]{CS74}.
\end{coro}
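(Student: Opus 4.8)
The plan is to pin down $\alpha$ by exhibiting $g^*\TP(\theta)$ as the form whose image under $i_2$ is the relevant differential character, and then to invoke injectivity of $i_2$. Set $V=(\pi\circ g)^*E=g^*\pi^*E\to M$ and $\omega=(\pi\circ g)^*\theta$. The bundle $\pi^*E\to E$ carries its tautological (diagonal) section, which pulls back to a section $\sigma$ of $V$, and the canonical bundle map $q\colon V\to E$ covering $\pi\circ g$ satisfies $q\circ\sigma=g$ and $\omega=q^*\theta$. Since the transgression form of \cite{CS74} is natural under bundle maps, $\TP(\omega)=q^*\TP(\theta)$, so that $\sigma^*\TP(\omega)=(q\circ\sigma)^*\TP(\theta)=g^*\TP(\theta)$. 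On the other hand, the construction in Proposition \ref{prop 1}, applied to $(\pi^*E,\pi^*\theta)$ and pulled back by $g$ via Theorem \ref{thm 1}(3), characterizes $\alpha$ as the unique element with $i_2(\alpha)=S_{P,u}(V,\omega)$; the uniqueness here is exactly injectivity of $i_2$ (as invoked in the proof of Theorem \ref{thm 1}(3)). It therefore suffices to prove the equality of differential characters $S_{P,u}(V,\omega)=i_2(\sigma^*\TP(\omega))$ in $\wh{H}^{2k}(M;\R/\Z)$.

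To prove this I would evaluate both sides on an arbitrary $w\in Z_{2k-1}(M)$. Choose a geometric cycle $\zeta(w)=[h\colon M'\to M]$ and $a(w)\in C_{2k}(M)$ with $[h_*c']_{\partial S_{2k}}=[w-\partial a(w)]_{\partial S_{2k}}$, where $c'$ is a fundamental cycle of $M'$. Since $P(\Omega_\omega)$ vanishes on the $(2k-1)$-dimensional $M$, the defining property of a differential character and thin invariance give $S_{P,u}(V,\omega)(w)=S_{P,u}(V,\omega)(h_*c')\bmod\Z$, and by naturality (Theorem \ref{thm 1}(3)) this equals $S_{P,u}(h^*V,h^*\omega)(c')$. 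Now $h^*V\to M'$ is trivial, with section $h^*\sigma$, so it extends over the cone $CM'$---a compact oriented $2k$-dimensional $p$-stratifold with $\partial(CM')=M'$---to a bundle $\bar V$ with a section $\bar\sigma$ restricting to $h^*\sigma$ and a connection $\bar\omega$ restricting to $h^*\omega$. Restricting $S_{P,u}(\bar V,\bar\omega)$ to $M'$ (Theorem \ref{thm 1}(3)), using $\delta_1(S_{P,u}(\bar V,\bar\omega))=P(\Omega_{\bar\omega})$ (Theorem \ref{thm 1}(1)), and applying the character property to the fundamental chain $b'$ of $CM'$ (so $\partial b'=c'$) yields
\[ S_{P,u}(h^*V,h^*\omega)(c')=\int_{b'}P(\Omega_{\bar\omega})\bmod\Z. \]

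The geometric input of \cite{CS74} enters here. Since $\pi_{\bar V}\circ\bar\sigma=\id$ and $d\TP(\bar\omega)=\pi_{\bar V}^*P(\Omega_{\bar\omega})$, we have $P(\Omega_{\bar\omega})=\bar\sigma^*\pi_{\bar V}^*P(\Omega_{\bar\omega})=d(\bar\sigma^*\TP(\bar\omega))$, so Stokes' theorem and naturality of $\TP$ give
\[ \int_{b'}P(\Omega_{\bar\omega})=\int_{c'}(h^*\sigma)^*\TP(h^*\omega)=\int_{M'}h^*(\sigma^*\TP(\omega))\bmod\Z. \]
Finally, $d(\sigma^*\TP(\omega))=P(\Omega_\omega)=0$ on $M$ and forms vanish on thin chains, so $\int_{M'}h^*(\sigma^*\TP(\omega))=\int_{h_*c'}\sigma^*\TP(\omega)=\int_w\sigma^*\TP(\omega)\bmod\Z$. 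Combining the three displays gives $S_{P,u}(V,\omega)(w)=\int_w\sigma^*\TP(\omega)=i_2(\sigma^*\TP(\omega))(w)\bmod\Z$ for every $w$, hence $S_{P,u}(V,\omega)=i_2(\sigma^*\TP(\omega))$, and injectivity of $i_2$ forces $\alpha=\sigma^*\TP(\omega)=g^*\TP(\theta)$.

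I expect the main obstacle to be the coning step: extending the trivial bundle $h^*V$ with its connection over $CM'$ within the stratifold framework of \cite{BB14,K10}, and justifying that the character property of $S_{P,u}(\bar V,\bar\omega)$ may be applied to the fundamental chain of $CM'$ with boundary $c'$. Everything else---naturality of $\TP$, the transgression identity $d\TP(\bar\omega)=\pi_{\bar V}^*P(\Omega_{\bar\omega})$, and the bookkeeping modulo $\Z$ and modulo thin chains---is routine given Theorem \ref{thm 1} and \cite{CS74}. Conceptually, the reason the expected flat ambiguity does not appear is exactly that $\pi^*E\to E$ possesses the tautological section, so the relevant bundle bounds a trivialized bundle over the cone and its secondary invariant is computed by the transgression form.
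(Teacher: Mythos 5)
Your proof is correct, but it takes a genuinely different route from the paper's. The paper disposes of the key point in one stroke: it quotes \cite[Proposition 2.8]{CS85}, which asserts $\pi^*S_{P,u}(E,\theta)=i_2(\TP(\theta))$ on the total space (legitimate because Remark \ref{remark 1} identifies the paper's character with the Cheeger--Simons one), then compares the evaluation $\int_z\TP(\theta)=\int_M g^*\TP(\theta)+\int_{a(z)}P(\pi^*\Omega)$, obtained from $d\TP(\theta)=\pi^*P(\Omega)$ \cite[Proposition 3.2]{CS74}, with the hypothesis, and concludes via injectivity of $i_2$. You make the same initial reduction --- $\alpha$ is characterized by $i_2(\alpha)=S_{P,u}(V,\omega)$ with $V=(\pi\circ g)^*E$, $\omega=q^*\theta$ --- but you then prove the identity $S_{P,u}(V,\omega)=i_2(g^*\TP(\theta))$ from scratch, exploiting the tautological section $\sigma$ of $V$, coning off $M'$, extending the trivialized bundle with its section and connection over $CM'$, and combining the character property of $S_{P,u}(\bar V,\bar\omega)$ with Stokes' theorem and the transgression identity. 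What this buys: your argument uses only Theorem \ref{thm 1} and the naturality and transgression facts of \cite{CS74}, so it is independent of the Cheeger--Simons construction (hence of universal bundles and connections); in effect you re-prove the relevant case of \cite[Proposition 2.8]{CS85}, which is more consistent with the paper's stated program than citing it. What it costs: the coning step, which you rightly flag, needs stratifold-specific justification --- that the cone on a closed oriented regular $p$-stratifold of dimension $2k-1\geq 1$ is again a compact oriented regular $p$-stratifold (true in Kreck's framework; the cone point has codimension $2k\geq 2$, so orientability is not obstructed), and that the trivialized bundle, section and connection extend over the cone (extend the connection $1$-form by a cutoff vanishing near the cone point). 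Two minor remarks: thin invariance is not actually needed where you invoke it, since every $2k$-form on the $(2k-1)$-dimensional $M$ vanishes identically; and your appeal to injectivity of $i_2$ is exactly the exactness of one of the diagonals of (\ref{eq 2.3}), the same fact the paper uses tacitly.
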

\begin{proof}~~~~~~~~~~~~~~~~~~~~
By the naturality of $S_{P, u}$, we have
\begin{eqnarray}
\pi^*S_{P, u}(E, \theta)(z)&=&S_{P, u}(\pi^*E, \pi^*\theta)(z)\nonumber\\
&=&\int_M\alpha+\int_{a(z)}P(\pi^*\Omega)\mod\Z,\label{eq 3.10}
\end{eqnarray}
where $\alpha\in\Omega^{2k-1}(M)$ is, unique up to a closed $(2k-1)$-form with periods in
$\Z$, such that $i_2(\alpha)=g^*S_{P, u}(\pi^*E, \pi^*\theta)$. Note that
\begin{eqnarray}
\int_z\TP(\theta)&=&\int_{g_*c}\TP(\theta)+\int_{a(z)}d\TP(\theta)\nonumber\\
&=&\int_Mg^*\TP(\theta)+\int_{a(z)}\pi^*P(\Omega)\nonumber\\
&=&\int_Mg^*\TP(\theta)+\int_{a(z)}P(\pi^*\Omega),\label{eq 3.11}
\end{eqnarray}
where $c\in Z_{2k-1}(M)$ represents the fundamental class of $M$. Here the second equality
follows from \cite[Proposition 3.2]{CS74} and the third equality follows from the fact that
$P(\Omega)\in\Omega(\pi^*E)$ is horizontal. Since
$$\pi^*(S_{P, u}(E, \theta))=i_2(\TP(\theta))$$
by \cite[Proposition 2.8]{CS85}, by (\ref{eq 3.10}) and (\ref{eq 3.11}) we have
\begin{displaymath}
\alpha=g^*\TP(\theta)\in\frac{\Omega^{2k-1}(M)}{\Omega^{2k-1}_\Z(M)}.\qedhere
\end{displaymath}
\end{proof}

One can apply a similar procedure as in Proposition \ref{prop 1} to construct differential
Chern classes, differential Pontryagin classes and differential Euler class. For a
Hermitian bundle $E\to X$ with a unitary connection $\nabla^E$, the $k$-th differential
Chern class $\wh{c}_k(E, \nabla)\in\wh{H}^{2k}(X; \R/\Z)$, where $k\geq 1$, is given by
\begin{equation}\label{eq 3.12}
\wh{c}_k(E, \nabla)(z)=\int_M\alpha+\int_{a(z)}c_k(\nabla)\mod\Z.
\end{equation}
The total differential Chern class $\wh{c}(E, \nabla)$ is defined to be
\begin{equation}\label{eq 3.13}
\wh{c}(E, \nabla):=1+\wh{c}_1(E, \nabla)+\cdots.
\end{equation}
For a Euclidean vector bundle $E\to X$ with a metric connection $\nabla^E$, the $k$-th
differential Pontryagin class $\wh{p}_k(E, \nabla)\in\wh{H}^{4k}(X; \R/\Z)$ is given by
\begin{equation}\label{eq 3.14}
\wh{p}_k(E, \nabla)(z)=\int_M\alpha+\int_{a(z)}p_k(\nabla)\mod\Z,
\end{equation}
and the differential Euler class $\wh{\chi}(E, \nabla)\in\wh{H}^{2n}(X; \R/\Z)$ with
$n=\rk(E)$ is given by
\begin{equation}\label{eq 3.15}
\wh{\chi}(E, \nabla)(z)=\int_M\alpha+\int_{a(z)}\chi(\nabla)\mod\Z.
\end{equation}
In particular, analogous statements of Theorem \ref{thm 1} hold for these differential
characteristic classes, and therefore its uniqueness. One can compare (\ref{eq 3.12}),
(\ref{eq 3.14}) and (\ref{eq 3.15}) with \cite[(4.4)]{CS85}, \cite[(3.3)]{CS85} and
\cite[(5.1)]{CS85}.

\begin{exam}\label{exam 1}
Let $\varepsilon\to X$ be a trivial complex vector bundle with a metric and a unitary flat
connection $d$. For $k\geq 1$, since $\wh{c}_k(\varepsilon, d)$ is natural and its curvature
and characteristic class are zero respectively, it follows from the uniqueness (see
Remark \ref{remark 1}) that $\wh{c}_k(\varepsilon, d)=0$.
\end{exam}

\subsection{Differential Chern classes on $\wh{K}_{\SSSS}$}

In this subsection we show that the differential Chern classes given by (\ref{eq 3.12}) is
the unique natural transformation from Simons-Sullivan differential $K$-theory to
differential characters. We refer the details of $\wh{K}_{\SSSS}$ to \cite{SS10}.

First of all we give a ''relative" interpretation of the form $\alpha$ in
(\ref{eq 3.12}).

\begin{lemma}\label{lemma 2}
Let $E\to X$ be a Hermitian bundle with $X$ compact. If $\nabla^0$ and $\nabla^1$ are two
unitary connections on $E\to X$ and for $i=0, 1$,
$$\wh{c}_k(E, \nabla^i)(z)=\int_M\alpha_i+\int_{a(z)}c_k(\nabla^i)\mod\Z$$
as given in (\ref{eq 3.12}), then
\begin{equation}\label{eq 3.16}
\alpha_1-\alpha_0=g^*Tc_k(\nabla^1, \nabla^0)\in\frac{\Omega^{2k-1}(M)}
{\Omega^{2k-1}_\Z(M)}
\end{equation}
where $Tc_k(\nabla^1, \nabla^0)$ is the transgression form between the $k$-th Chern forms
of $\nabla^1$ and $\nabla^0$, and $\zeta(z)=[g:M\to X]$.
\end{lemma}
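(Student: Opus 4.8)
The plan is to first establish the corresponding absolute identity on $X$,
\[
\wh{c}_k(E, \nabla^1) - \wh{c}_k(E, \nabla^0) = i_2\big(Tc_k(\nabla^1, \nabla^0)\big) \in \wh{H}^{2k}(X; \R/\Z),
\]
and then to deduce (\ref{eq 3.16}) from it by naturality. By part (2) of Theorem \ref{thm 1} both $\wh{c}_k(E, \nabla^0)$ and $\wh{c}_k(E, \nabla^1)$ have characteristic class $c_k(E)$, so their difference lies in $\ker\delta_2$. By the exactness of the diagonal sequence through $i_2$ and $\delta_2$ in (\ref{eq 2.3}), there is a unique $\gamma\in\frac{\Omega^{2k-1}(X)}{\Omega^{2k-1}_\Z(X)}$ with $i_2(\gamma)=\wh{c}_k(E,\nabla^1)-\wh{c}_k(E,\nabla^0)$, and $i_2$ is injective. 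Applying the curvature $\delta_1$ and using $\delta_1\circ i_2=d$ together with part (1) of Theorem \ref{thm 1} gives $d\gamma=c_k(\nabla^1)-c_k(\nabla^0)=d\,Tc_k(\nabla^1,\nabla^0)$, so $\gamma-Tc_k(\nabla^1,\nabla^0)$ is closed; it remains to remove this closed ambiguity.

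To pin down $\gamma$ I would use a homotopy of connections. Let $I=[0,1]$, let $p:X\times I\to X$ be the projection, and equip $p^*E$ with the connection $\wt{\nabla}$ interpolating linearly between $\nabla^0$ at $t=0$ and $\nabla^1$ at $t=1$. Writing $j_t:X\to X\times I$ for the inclusion at level $t$, naturality (part (3) of Theorem \ref{thm 1}) gives $j_t^*\wh{c}_k(p^*E,\wt{\nabla})=\wh{c}_k(E,\nabla^t)$, while part (1) gives $\delta_1(\wh{c}_k(p^*E,\wt{\nabla}))=c_k(\wt{\nabla})$. The homotopy formula for differential characters then yields
\[
\wh{c}_k(E, \nabla^1) - \wh{c}_k(E, \nabla^0) = j_1^*\wh{c}_k(p^*E, \wt{\nabla}) - j_0^*\wh{c}_k(p^*E, \wt{\nabla}) = i_2\Big(\int_I c_k(\wt{\nabla})\Big),
\]
and since $\int_I c_k(\wt{\nabla})=Tc_k(\nabla^1,\nabla^0)$ is exactly the transgression form of \cite{CS74}, this proves the absolute identity and identifies $\gamma=Tc_k(\nabla^1,\nabla^0)$.

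Finally I would transport this identity to $M$. By construction $\alpha_i$ is characterized by $i_2(\alpha_i)=S_{P,u}(g^*E,g^*\nabla^i)=g^*\wh{c}_k(E,\nabla^i)$, so by the identity just established and the naturality of $i_2$,
\[
i_2(\alpha_1-\alpha_0) = g^*\big(\wh{c}_k(E,\nabla^1)-\wh{c}_k(E,\nabla^0)\big) = g^* i_2\big(Tc_k(\nabla^1,\nabla^0)\big) = i_2\big(g^* Tc_k(\nabla^1,\nabla^0)\big);
\]
injectivity of $i_2$ on $M$ then gives (\ref{eq 3.16}). The main obstacle is the homotopy formula: if it is not already available from the cited results, one must derive it, either abstractly from the exact sequences of (\ref{eq 2.3}) or directly from the defining formula (\ref{eq 3.3}) and the chain homotopy of Lemma \ref{lemma 1}. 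A naive alternative of trying to eliminate the closed ambiguity in $\gamma$ by evaluating $i_2(\gamma)$ on cycles via (\ref{eq 3.12}) reintroduces the unknown forms $\alpha_i$ and is therefore circular, which is why the homotopy step seems essential.
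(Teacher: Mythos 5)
Your proof is correct, but it reaches the lemma by a genuinely different route at the decisive point. The paper's own proof simply cites the analogue of \cite[Proposition 2.9]{CS85} for vector bundles to obtain the absolute identity $\wh{c}_k(E, \nabla^1)-\wh{c}_k(E, \nabla^0)=i_2(Tc_k(\nabla^1, \nabla^0))$ on $X$, and then descends to $M$ by evaluating both sides on $z$ through the decomposition $[\zeta(z)]_{\partial S_{2k}}=[z-\partial a(z)]_{\partial S_{2k}}$, producing (\ref{eq 3.17}) and (\ref{eq 3.18}) and comparing them. You instead prove that absolute identity from scratch: the $\delta_2$--exactness argument places the difference of characters in the image of $i_2$, and the homotopy formula for differential characters, applied to the linearly interpolating connection $\wt{\nabla}$ on $p^*E\to X\times I$ together with parts (1) and (3) of Theorem \ref{thm 1}, identifies the preimage as $\int_I c_k(\wt{\nabla})=Tc_k(\nabla^1, \nabla^0)$; you then descend via naturality and injectivity of $i_2$ on $M$. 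Your version buys self-containedness: the only external input is the homotopy formula, which is available in \cite{BB14} (the very framework the paper is built on), so the lemma no longer leans on \cite{CS85}, in keeping with the paper's stated aim of avoiding imported machinery. Your descent step is also tighter than the paper's: comparing (\ref{eq 3.17}) with (\ref{eq 3.18}) literally only yields equality of the numbers $\int_M(\alpha_1-\alpha_0)$ and $\int_M g^*Tc_k(\nabla^1, \nabla^0)$ mod $\Z$ for the given $z$, and one still needs the characterization $i_2(\alpha_i)=g^*\wh{c}_k(E, \nabla^i)$ plus injectivity of $i_2$ --- exactly the step you make explicit. What the paper's route buys is brevity. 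One cosmetic remark: your opening paragraph (producing $\gamma$ and showing $\gamma-Tc_k(\nabla^1, \nabla^0)$ is closed) is subsumed by the homotopy argument and could be deleted without loss.
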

\begin{proof}
For $z\in Z_{2k-1}(X)$, we have $[\zeta(z)]_{\partial S_{2k}}=[z-\partial a(z)]_{\partial
S_{2k}}$ by (\ref{eq 2.2}). Note that
\begin{eqnarray}
&\quad& i_2(Tc_k(\nabla^1, \nabla^0))(z)\nonumber\\
&=&\int_zTc_k(\nabla^1, \nabla^0)\mod\Z\nonumber\\
&=&\int_{[\zeta(z)]_{\partial S_{2k}}}Tc_k(\nabla^1, \nabla^0)+\int_{a(z)}dTc_k
(\nabla^1, \nabla^0)\mod\Z\nonumber\\
&=&\int_Mg^*Tc_k(\nabla^1, \nabla^0)+\int_{a(z)}(c_k(\nabla^1)-c_k(\nabla^0))\mod\Z
\label{eq 3.17}
\end{eqnarray}
and
\begin{equation}\label{eq 3.18}
\begin{split}
&\quad\wh{c}_k(E, \nabla^1)(z)-\wh{c}_k(E, \nabla^0)(z)\\
&=\int_M(\alpha_1-\alpha_0)+\int_{a(z)}(c_k(\nabla^1)-c_k(\nabla^0))\mod\Z.
\end{split}
\end{equation}
By the analogue of \cite[Proposition 2.9]{CS85} for vector bundles, we have
$$\wh{c}_k(E, \nabla^1)-\wh{c}_k(E, \nabla^0)=i_2(Tc_k(\nabla^1, \nabla^0)).$$
Thus (\ref{eq 3.16}) follows from (\ref{eq 3.17}) and (\ref{eq 3.18}).
\end{proof}

The following proposition shows that the differential Chern classes is a well defined map
from Simons-Sullivan differential $K$-theory to differential characters.

\begin{prop}\label{prop 3}
Let $X$ be compact. For each $k\geq 1$, the $k$-th differential Chern class $\wh{c}_k:
\wh{K}_{\SSSS}(X)\to\wh{H}^{2k}(X; \R/\Z)$ defined on a generator $\E$ of $\wh{K}_{\SSSS}
(X)$ by
\begin{equation}\label{eq 3.19}
\wh{c}_k(\E):=\wh{c}_k(E, \nabla),
\end{equation}
is a well defined map.
\end{prop}
\begin{proof}
\emph{A priori} $\wh{c}_k(\E)$ is not well defined on the level of generators as the right
hand side of (\ref{eq 3.19}) depends on the choice of $\nabla\in[\nabla]$. Take another
connection $\nabla'\in[\nabla]$. By the definition of $\wh{K}_{\SSSS}(X)$ we have
\dis{\CS(\nabla', \nabla)=0\in\frac{\Omega^{\odd}(X)}{d\Omega^{\even}(X)}}. Thus
$\ch(\nabla')=\ch(\nabla)$, which implies $c_k(\nabla')=c_k(\nabla)$ for all $k\geq 1$. If
we write
$$\wh{c}_k(E, \nabla')(z)=\int_M\alpha'+\int_{a(z)}c_k(\nabla')\mod\Z$$
for $z\in Z_{2k-1}(X)$, it follows from Lemma \ref{lemma 2} that
$$\alpha'-\alpha=g^*Tc_k(\nabla', \nabla)=0\in\frac{\Omega^{2k-1}(M)}{\Omega^{2k-1}_\Z(M)}.$$
Thus $\wh{c}_k(\E)$ is independent of the choice of representative of the connection.

Now we show that $\wh{c}_k$ is a well defined map. Let $\E=\F\in\wh{K}_{\SSSS}(X)$. We
prove that
\begin{equation}\label{eq 3.20}
\wh{c}_k(\E)=\wh{c}_k(\F)
\end{equation}
for all $k\geq 1$. By the definition of $\wh{K}_{\SSSS}(X)$, there exists a structured bundle
$\G$ such that
$$\E\oplus\G\cong\F\oplus\G.$$
By \cite[Corollary 3]{PT13} there exists a structured inverse $\HH$ to $\G$ (which is
proved without using universal bundles and universal connections), i.e., $\HH\oplus\G=
[n]$ for some $n\in\N$. Thus
\begin{displaymath}
\begin{split}
\E\oplus\G\oplus\HH&\cong\F\oplus\G\oplus\HH\nonumber\\
\Rightarrow\E-[n]&\cong\F-[n]\\
\Rightarrow\wh{c}(\E-[n])&=\wh{c}(\F-[n]).
\end{split}
\end{displaymath}
By Example \ref{exam 1}, we have
$$\wh{c}(\E-[n]):=\frac{\wh{c}(\E)}{\wh{c}([n])}=\wh{c}(\E)$$
and similarly we have $\wh{c}(\F-[n])=\wh{c}(\F)$. Thus (\ref{eq 3.20}) holds.
\end{proof}

Denote by $s_k(x_1, \ldots, x_n)$ the $k$-th elementary symmetric function and $P_k(x_1,
\ldots, x_n)$ the $k$-th Newton's function of $n$ variables $x_1, \ldots, x_n$ , i.e.,
$$s_k(x_1, \ldots, x_n)=\sum_{i_1<\cdots<i_k}x_{i_1}\cdots x_{i_k},\qquad P_k(x_1, \ldots,
x_n)=\sum_{j=1}^nx_j^k.$$
For any $k\leq n$, we have \cite{M92}
$$P_k+\sum_{j=1}^{k-1}P_{k-j}s_j+(-1)^ks_k=0.$$
It follows from the above identity that we can express each $s_k$ in terms of the $P_j$'s
and vice versa.

Let $k\geq 1$. Define a map $s_k:\Omega^{\even}(X)\to\Omega^{2k}(X)$ as follows. Write
\dis{\omega=\sum_{j=0}\frac{1}{j!}\omega_{[2j]}\in\Omega^{\even}(X)}, where \dis{\frac{1}{j!}
\omega_{[2j]}\in\Omega^{2j}(X)} is the degree $2j$ component of $\omega$. Write
\dis{\omega_{[2j]}=\frac{1}{j!}\omega'_{[2j]}}, where $\omega'_{[2j]}:=j!\omega_{[2j]}$.
Define
$$s_k(\omega)=s_k\bigg(\frac{1}{2!}\omega'_{[2]}, \ldots, \frac{1}{j!}\omega'_{[2j]},
\ldots\bigg),$$
to be the $k$-th elementary symmetric function of the $\omega_{[2j]}$'s. Note that for
each $k\geq 1$, $s_k(\omega)$ can be given in terms of the Newton's functions $P_\ell
(\omega'_{[2]}, \ldots, \omega'_{[2j]}, \ldots)$. For example, $s_k(\ch(\nabla))=c_k
(\nabla)$, the $k$-th Chern form of $\nabla$.

Let $\Omega^\bullet_{\BU}(X)=\set{\omega\in\Omega^\bullet_{d=0}|[\omega]\in\im(\ch^\bullet:
K^{-(\bullet\mod 2)}(X)\to H^\bullet(X; \Q))}$, where $\bullet\in\set{\even, \odd}$.
\begin{lemma}\label{lemma 3}
$s_k(\Omega^{\even}_{\BU}(X))\subseteq\Omega^{2k}_\Z(X)$.
\end{lemma}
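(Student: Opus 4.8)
The plan is to compute the de Rham cohomology class of $s_k(\omega)$, identify it with the $k$-th Chern class of a virtual bundle, and then invoke the integrality of Chern classes. Let $\omega\in\Omega^{\even}_{\BU}(X)$. By definition $\omega$ is closed and $[\omega]\in\im(\ch)$ for $\ch\colon K^0(X)\to H^{\even}(X;\Q)$, so there is a $\xi\in K^0(X)$ with $[\omega]=\ch(\xi)$. Since $d\omega=0$ forces each homogeneous piece $\omega_{[2j]}$ to be closed, and since $H^{\even}(X;\R)=\bigoplus_j H^{2j}(X;\R)$, comparing degrees yields $[\omega_{[2j]}]=\ch_j(\xi)$, the degree-$2j$ component of the Chern character of $\xi$.

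Next I would push the operation $s_k$ down to cohomology. By construction $s_k(\omega)$ is a polynomial, built through the Newton's functions and the wedge product alone, in the closed forms $\omega'_{[2j]}=j!\,\omega_{[2j]}$; hence $s_k(\omega)$ is again closed, and because $[\eta\wedge\eta']=[\eta]\cup[\eta']$ for closed forms, its class $[s_k(\omega)]$ is the same polynomial, now formed with cup products, in the classes $[\omega'_{[2j]}]$. This is precisely the universal polynomial expressing the $k$-th Chern class in terms of the Chern character -- the very relation giving $s_k(\ch(\nabla))=c_k(\nabla)$ at the level of forms -- so, read off in $H^{2k}(X;\Q)$, it gives $[s_k(\omega)]=c_k(\xi)$.

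Finally I would invoke integrality. The class $c_k(\xi)$ is the $k$-th Chern class of the virtual bundle $\xi$, i.e. the pullback of the universal integral class $c_k\in H^{2k}(\BU;\Z)$ under a classifying map (for a formal difference $E-F$ one expands $c(\xi)=c(E)c(F)^{-1}$, whose components are integral polynomials in the $c_i(E),c_i(F)$), so $c_k(\xi)$ lies in the image of $H^{2k}(X;\Z)\to H^{2k}(X;\R)$. Therefore the closed form $s_k(\omega)$ has periods in $\Z$, that is, $s_k(\omega)\in\Omega^{2k}_\Z(X)$, which is the assertion.

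The step I expect to be the main obstacle is the middle one. Because $s_k$ is nonlinear it does not descend to cohomology for formal reasons, and an arbitrary representative $\omega$ need not equal any Chern character form $\ch(\nabla)$, so one cannot simply replace $\omega$ by a curvature expression. The argument must be run entirely at the level of closed forms and their classes, using the closedness of each $\omega_{[2j]}$ to pass legitimately from wedge to cup products, and then matching the resulting cohomological expression with the \emph{universal} identity $c_k=s_k(\ch)$ rather than with its form-level shadow for Chern character forms.
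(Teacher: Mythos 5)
Your proof is correct, but it takes a different route from the paper's: you work at the level of de Rham cohomology classes, whereas the paper works at the level of forms. The paper's proof chooses a form-level decomposition $\omega=\ch(\nabla)+d\alpha$ with $\nabla$ a unitary connection on an honest Hermitian bundle (the degree-$0$ discrepancy between a virtual class and an actual bundle is harmless, since $s_k$ only involves the components of degree $2,\dots,2k$), expands $s_k(\omega)=c_k(\nabla)+Q_k(\cdots)$, and observes that every cross term $\ch_j(\nabla)^m\wedge(d\alpha_{[2i-1]})^q$ with $q\geq 1$ is exact and hence has zero periods; integrality then comes from Chern--Weil theory for the actual bundle, $c_k(\nabla)\in\Omega^{2k}_\Z(X)$. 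You instead pass immediately to cohomology: closedness of each $\omega_{[2j]}$ turns wedge products into cup products, so $[s_k(\omega)]$ is the universal Newton polynomial in the classes $\ch_j(\xi)$, i.e.\ $c_k(\xi)$ for a virtual class $\xi\in K^0(X)$, and periods of a closed form depend only on its class. Your route buys: no choice of connection, no bookkeeping of exact error terms, and it makes transparent that only $[\omega]$ matters. The paper's route buys: it never needs Chern classes of virtual bundles, only classical Chern--Weil integrality for a single actual bundle.

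One step you should make explicit: the identity $[s_k(\omega)]=c_k(\xi)$ invokes the Newton-identity relation between $c_k$ and the $\ch_j$ for a \emph{virtual} class $\xi=E-F$, whereas the relation $s_k(\ch(\nabla))=c_k(\nabla)$ you cite is a statement about actual bundles, and neither side is additive in $\xi$, so the extension is not formal. It does hold: both $\xi\mapsto c(\xi):=c(E)\cup c(F)^{-1}$ and $\xi\mapsto\exp\bigl(\sum_{j\geq1}(-1)^{j-1}(j-1)!\,\ch_j(\xi)\bigr)$ (which is what the Newton identities compute) are homomorphisms from $(K^0(X),+)$ to the multiplicative group of unipotent elements of $H^{\even}(X;\Q)$, and they agree on classes of actual bundles, hence on all of $K^0(X)$. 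Your parenthetical expansion $c(\xi)=c(E)c(F)^{-1}$ correctly delivers integrality once this identification is made, but the identification itself is the point needing the multiplicativity argument. (The paper's proof glosses over the symmetric point: writing $\omega=\ch(\nabla)+d\alpha$ for an actual bundle uses that $\ch(E-F)$ and $\ch(E\oplus F^{\perp})$ agree in positive degrees.)
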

\begin{proof}
Let $\omega\in\Omega^{\even}_{\BU}(X)$. Then $\omega=\ch(\nabla)+d\alpha$, where $\nabla$
is a unitary connection on a Hermitian bundle over $X$ and $\alpha\in\Omega^{\odd}(X)$.
Write \dis{\ch(\nabla)=\sum_{j=0}^n\ch_j(\nabla)}, where $\ch_j(\nabla)$ is the degree
$2j$ component of $\ch(\nabla)$. Thus for each $j\geq 1$, $\omega_{[2j]}=\ch_j(\nabla)+d
(\alpha_{[2j-1]})$. Note that
\begin{displaymath}
\begin{split}
s_k(\alpha)&=s_k\bigg(\frac{1}{2!}\omega_{[2]}', \ldots, \frac{1}{j!}\omega'_{[2j]}, \ldots
\bigg)\\
&=c_k(\nabla)+Q_k\bigg(\frac{1}{2!}\omega_{[2]}', \ldots, \frac{1}{j!}\omega'_{[2j]}, \ldots
\bigg).
\end{split}
\end{displaymath}
Here \dis{Q_k\bigg(\frac{1}{2!}\omega_{[2]}', \ldots, \frac{1}{j!}\omega'_{[j]}, \ldots
\bigg)} is a sum of rational multiples of $\ch_j(\nabla)^m\wedge(d\alpha_{[2i-1]})^q$ for
some $m\geq 0$ and $q\geq 1$. Since each $\ch_j(\nabla)^m\wedge(d\alpha_{[2i-1]})^q$ is
exact, it has period $0$. Together with the fact that $c_k(\nabla)\in\Omega^{2k}_\Z(X)$,
we have $s_k(\omega)\in\Omega^{2k}_\Z(X)$.
\end{proof}

By Proposition \ref{prop 3}, we can reformulate Theorem \ref{thm 1} as follows.

\begin{coro}\label{coro 2}
For each $k\geq 0$, there exists a unique natural transformation $\wh{c}_k:\wh{K}_{\SSSS}
(\ast)\to\wh{H}^{2k}(\ast; \R/\Z)$ such that it is compatible with curvature and
characteristic class, i.e., for each compact $X$, the following diagrams commute.
\cdd{\wh{K}_{\SSSS}(X) @>\wh{c}_k>> \wh{H}^{2k}(X; \R/\Z) \\ @V\ch_{\wh{K}_{\SSSS}} VV
@VV\delta_1 V \\ \Omega^{\even}_{\BU}(X) @>>s_k> \Omega^{2k}_\Z(X)}
\cdd{\wh{K}_{\SSSS}(X) @>\wh{c}_k>> \wh{H}^{2k}(X; \R/\Z) \\ @V\delta VV @VV\delta_2 V
\\ K(X) @>>c_k> H^{2k}(X; \Z)}
where $\ch_{\wh{K}_{\SSSS}}(\E)=\ch(\nabla)$, and $\delta(\E)=[E]$.
\end{coro}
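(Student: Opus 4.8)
The plan is to package the constructions and results already established into the two commuting squares and the uniqueness claim. The existence of the natural transformation $\wh{c}_k$ is essentially done: Proposition \ref{prop 3} shows that $\wh{c}_k:\wh{K}_{\SSSS}(X)\to\wh{H}^{2k}(X;\R/\Z)$ is well defined on the generators of $\wh{K}_{\SSSS}$, and part (3) of Theorem \ref{thm 1} gives naturality with respect to smooth maps $f:Y\to X$ (applied to the Chern polynomial $P=s_k$), so $\wh{c}_k$ is indeed a natural transformation of functors on compact $X$. What remains is to verify the two diagrams and uniqueness.

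For the upper square I would trace a generator $\E=(E,[\nabla])$ both ways. Going right then down gives $\delta_1(\wh{c}_k(E,\nabla))$, which by part (1) of Theorem \ref{thm 1} equals the curvature $P(\Omega)=c_k(\nabla)$. Going down then right gives $s_k(\ch_{\wh{K}_{\SSSS}}(\E))=s_k(\ch(\nabla))$, and the remark following the definition of $s_k$ records precisely that $s_k(\ch(\nabla))=c_k(\nabla)$. Hence the square commutes; I should also note that the target is $\Omega^{2k}_\Z(X)$ rather than merely $\Omega^{2k}(X)$, but this is guaranteed by Lemma \ref{lemma 3}, since $\ch(\nabla)\in\Omega^{\even}_{\BU}(X)$. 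For the lower square, going right then down gives $\delta_2(\wh{c}_k(E,\nabla))$, which by part (2) of Theorem \ref{thm 1} equals $u(E)=c_k(E)\in H^{2k}(X;\Z)$; going down then right gives $c_k(\delta(\E))=c_k([E])$, the ordinary $k$-th Chern class of the underlying bundle. These agree by definition of the topological Chern class, so the lower square commutes.

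The uniqueness statement is where I expect the main subtlety, and it is the one point that does not follow immediately by citing an earlier result verbatim. The idea is to reduce to the uniqueness already proved at the level of principal bundles with connection. Suppose $\wh{c}_k'$ is another natural transformation fitting into both squares. On a generator $\E=(E,[\nabla])$, compatibility with $\delta_1$ forces $\delta_1(\wh{c}_k'(E,\nabla))=s_k(\ch(\nabla))=c_k(\nabla)$, which is exactly the curvature normalization in the hypothesis of Proposition \ref{prop 1} (with $P=s_k$ the $k$-th Chern polynomial). Combined with naturality, Proposition \ref{prop 1} then shows that $\wh{c}_k'(E,\nabla)$ is forced to coincide with the $S_{P,u}(E,\theta)$ of (\ref{eq 3.3}), i.e.\ with $\wh{c}_k(E,\nabla)$, on every generator. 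The remaining care is to check that this pointwise equality on generators is consistent with the group relations defining $\wh{K}_{\SSSS}(X)$; but since both $\wh{c}_k$ and $\wh{c}_k'$ are already known to be well-defined on $\wh{K}_{\SSSS}(X)$ (the former by Proposition \ref{prop 3}), agreement on generators propagates to agreement on all of $\wh{K}_{\SSSS}(X)$. I therefore expect the hard part to be purely bookkeeping: confirming that the normalization forced by the upper square matches the curvature hypothesis of Proposition \ref{prop 1} exactly, so that the earlier uniqueness applies without gap.
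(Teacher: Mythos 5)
Your proposal is correct and follows essentially the same route as the paper, which presents Corollary~\ref{coro 2} as a direct reformulation of Theorem~\ref{thm 1} (curvature, characteristic class, naturality) combined with the well-definedness from Proposition~\ref{prop 3} and the uniqueness mechanism of Proposition~\ref{prop 1} (via Remark~\ref{remark 1}). Your filling-in of the details—checking $s_k(\ch(\nabla))=c_k(\nabla)$, invoking Lemma~\ref{lemma 3} for the target $\Omega^{2k}_\Z(X)$, and reducing uniqueness of a competing $\wh{c}_k'$ to the generator-level uniqueness of Proposition~\ref{prop 1}—is exactly the intended argument.
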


We now prove the product formula of differential Chern classes.

\begin{prop}
Let $X$ be compact. The following diagram commutes
\begin{equation}\label{eq 3.21}
\begin{CD}
\wh{K}_{\SSSS}(X)\times\wh{K}_{\SSSS}(X) @>\oplus>> \wh{K}_{\SSSS}(X) \\ @V\wh{c}V\wh{c}V
@VV\wh{c}V \\ \wh{H}^{\even}(X; \R/\Z)\times\wh{H}^{\even}(X; \R/\Z) @>>\ast> \wh{H}^{\even}
(X; \R/\Z)
\end{CD}
\end{equation}
where $\ast$ is the product of differential characters \cite{CS85}.
\end{prop}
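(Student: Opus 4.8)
The plan is to verify the square degreewise. For structured bundles $\E,\F$ over the compact manifold $X$ and each $k\geq 1$, comparing the degree-$2k$ components of $\wh{c}(\E)\ast\wh{c}(\F)$ and $\wh{c}(\E\oplus\F)$ reduces the claim to the single identity
$$\wh{c}_k(\E\oplus\F)=\sum_{i+j=k}\wh{c}_i(\E)\ast\wh{c}_j(\F)\in\wh{H}^{2k}(X;\R/\Z),$$
the degree-$0$ part being the trivial identity $1\ast1=1$. Setting $D:=\wh{c}_k(\E\oplus\F)-\sum_{i+j=k}\wh{c}_i(\E)\ast\wh{c}_j(\F)$, my strategy is to show $D=0$ in two stages: first match the curvature $\delta_1(D)$ and the characteristic class $\delta_2(D)$, which is formal, and then eliminate the remaining flat ambiguity, which is the substantive step.

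For the curvature I would use the multiplicativity $\delta_1(a\ast b)=\delta_1(a)\wedge\delta_1(b)$ of the Cheeger-Simons product \cite{CS85} together with Theorem \ref{thm 1}(1) to get
$$\delta_1\Big(\sum_{i+j=k}\wh{c}_i(\E)\ast\wh{c}_j(\F)\Big)=\sum_{i+j=k}c_i(\nabla^E)\wedge c_j(\nabla^F)=c_k(\nabla^E\oplus\nabla^F),$$
where the last equality is the Whitney formula at the level of Chern \emph{forms}, immediate from the block-diagonal shape of the curvature of the direct-sum connection $\nabla^E\oplus\nabla^F$; this equals $\delta_1(\wh{c}_k(\E\oplus\F))$. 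The characteristic class is handled identically, using $\delta_2(a\ast b)=\delta_2(a)\cup\delta_2(b)$ \cite{CS85}, Theorem \ref{thm 1}(2), and the classical Whitney formula $c_k(E\oplus F)=\sum_{i+j=k}c_i(E)\cup c_j(F)$ in $H^{2k}(X;\Z)$. Hence $\delta_1(D)=0$ and $\delta_2(D)=0$, so by the exactness of the diagonal sequences in (\ref{eq 2.3}) the difference $D$ lies in $\im(i_1)$; that is, $D$ is a flat character with vanishing characteristic class.

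The hard part is to show this flat remainder is zero, since matching curvature and characteristic class determines a character only up to the image of $H^{2k-1}(X;\R/\Z)$. Here I would argue directly through the transgression, in the spirit of Lemma \ref{lemma 2} and Corollary \ref{coro 1}: evaluating both sides of the desired identity on a cycle $z\in Z_{2k-1}(X)$ by means of the explicit formula (\ref{eq 3.12}) and the chain-level description of the product $\ast$ from \cite{CS85}, the comparison of the two $\alpha$-parts in $\Omega^{2k-1}(M)/\Omega^{2k-1}_\Z(M)$ collapses to a single Leibniz-type transgression identity, expressing the Chern-Simons transgression associated to $\nabla^E\oplus\nabla^F$ in terms of the transgression forms and the Chern forms of the two summands (equivalently, the additivity of the Chern-Simons forms of the Chern character, transported to the $c_k$ via Newton's identities). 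Once this form-level identity is in hand the $\alpha$-parts agree, the full character values coincide, and $D=0$ follows without any separate treatment of the flat part.

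Conceptually the vanishing of $D$ is forced by the uniqueness already established: $D$ is a natural transformation with zero curvature and zero characteristic class, and such a transformation ought to be zero by the same mechanism as Remark \ref{remark 1} and Corollary \ref{coro 2} (with $\wh{c}_k(\varepsilon,d)=0$ of Example \ref{exam 1} and the structured inverses of Proposition \ref{prop 3} available to reduce off trivial summands). I expect the transgression identity of the previous paragraph to be the main obstacle; the curvature and characteristic-class bookkeeping is routine, and it is precisely the control of the flat part for which the explicit transgression computation is needed.
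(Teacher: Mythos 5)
Your curvature/characteristic-class matching is exactly what the paper does (via \cite[Theorem 1.11]{CS85} and the Whitney formulas at the level of forms and classes), but you have inverted the emphasis on the remaining step, and that inversion is the problem. The paper's entire proof of the flat part is your final ``conceptual'' paragraph: both $\wh{c}(\E)\ast\wh{c}(\F)$ and $\wh{c}(\E\oplus\F)$ are natural assignments (on pairs $(\E,\F)$, i.e.\ on bundles with structure group $\U(n)\times\U(m)$ and product connection) with the same curvature and characteristic class given by (\ref{eq 3.22}), so they coincide by Remark \ref{remark 1}. This is not a heuristic that ``ought to'' work; it is complete and rigorous, because the uniqueness of Proposition \ref{prop 1} requires only naturality and compatibility with curvature --- it is precisely naturality (pulling back along all $[g:M\to X]\in\LL_{2k-1}$, where the character is forced to be $i_2(\alpha)$ with $\int_M\alpha$ its value on the fundamental cycle) that pins down the value on every cycle via (\ref{eq 3.3}) and leaves no residual $H^{2k-1}(X;\R/\Z)$-ambiguity. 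So your worry that ``matching curvature and characteristic class determines a character only up to $\im(i_1)$'' is true for a single character on a single space, but irrelevant for natural families, which is the whole point of Proposition \ref{prop 1}.

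Consequently your proposed ``main route'' --- evaluating both sides on cycles through (\ref{eq 3.12}) and the chain-level Cheeger--Simons product, and reducing to a Leibniz-type transgression identity --- is both unnecessary and, as sketched, a genuine gap. You never produce the identity, and the reduction is in danger of being circular: the $\alpha$-parts on the two sides are defined by $i_2(\alpha)=g^*\wh{c}_k(\E\oplus\F)$ and $i_2(\alpha_{i,j})=g^*\bigl(\wh{c}_i(\E)\ast\wh{c}_j(\F)\bigr)$ respectively, so comparing them amounts to proving the same Whitney identity for the pulled-back bundles over the $(2k-1)$-dimensional stratifold $M$ --- you have not reduced the problem, only relocated it. (The chain-level description of $\ast$ in \cite{CS85} involves cocycle lifts and correction terms, so this route would in any case be far from the routine collapse you describe.) If you simply promote your last paragraph to be the proof --- stating $D$ is natural with $\delta_1(D)=0$, and invoking Proposition \ref{prop 1} with $G=\U(n)\times\U(m)$ and $(P,u)=(0,0)$ to get $D=0$ --- you recover the paper's argument and can delete the transgression discussion entirely.
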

\begin{proof}
For a pair $(\E, \F)\in\wh{K}_{\SSSS}(X)\times\wh{K}_{\SSSS}(X)$, Remark \ref{remark 1},
the uniqueness of the ring structure of $\wh{H}^{\even}(X; \R/\Z)$, and (3) of
\cite[Theorem 1.11]{CS85} imply that $\wh{c}(\E)\ast\wh{c}(\F)$ is the unique natural
differential character whose curvature and characteristic class are given by
\begin{equation}\label{eq 3.22}
\delta_1(\wh{c}(\E)\ast\wh{c}(\F))=c(\nabla^E)\wedge c(\nabla^F)\textrm{ and }\delta_2
(\wh{c}(\E)\ast\wh{c}(\F))=c(E)\cup c(F).
\end{equation}
On the other hand, for a pair $(\E, \F)\in\wh{K}_{\SSSS}(X)\times\wh{K}_{\SSSS}(X)$,
$\wh{c}(\E\oplus\F)$ is also the unique natural differential character whose curvature
and characteristic class are given by (\ref{eq 3.22}) respectively. Thus (\ref{eq 3.21})
holds.
\end{proof}

\subsection{Differential Chern classes on $\wh{K}_{\FL}$}\label{3.3}

In this subsection we give an explicit formula of differential Chern classes on Freed-Lott
differential $K$-theory. We refer the details of Freed-Lott differential $K$-theory to
\cite{FL10}.

Defining differential Chern classes on Freed-Lott differential $K$-theory $\wh{K}_{\FL}$
involves one more issue: since generators of Freed-Lott differential $K$-group are of the
form $(E, h, \nabla, \phi)$, where \dis{\phi\in\frac{\Omega^{\odd}(X)}{d\Omega^{\even}(X)}},
we have to consider $\phi$ when defining $\wh{c}_k$ on generators of $\wh{K}_{\FL}(X)$. A
natural choice for the form part of the definition of $\wh{c}_k(E, h, \nabla, \phi)$
would be $i_2(\phi_{[2k-1]})$, where $\phi_{[2k-1]}$ is the degree $2k-1$ component of
$\phi$, as the differential Chern character $\wh{\ch}_{\FL}:\wh{K}_{\FL}(X)\to\wh{H}^{\even}
(X; \R/\Q)$ is defined in this way \cite[\S8.13]{FL10}. However, as we will see below, this
definition is not correct.

On the other hand, Simons-Sullivan differential $K$-theory is isomorphic to Freed-Lott
differential $K$-theory via unique ring isomorphisms $f:\wh{K}_{\SSSS}(X)\to\wh{K}_{\FL}(X)$
and $g:\wh{K}_{\FL}(X)\to\wh{K}_{\SSSS}(X)$ (see \cite[Theorem 1]{H12} for the definitions
of $f$ and $g$). We might define differential Chern classes on Freed-Lott differential
$K$-theory, denoted by $\wh{c}_k^{\FL}:\wh{K}_{\FL}(X)\to\wh{H}^{2k}(X; \R/\Z)$, by
$$\wh{c}_k^{\FL}(E, h^E, \nabla^E, \phi):=(\wh{c}_k\circ g)(E, h^E, \nabla^E, \phi).$$
Since the formula of $g$ is complicated, we refrain from doing so. Instead, we define
differential Chern classes on $\wh{K}_{\FL}$ directly, as follows.

\begin{prop}\label{prop 5}
Let $X$ be compact. The map $\wh{c}_k^{\FL}:\wh{K}_{\FL}(X)\to\wh{H}^{2k}(X; \R/\Z)$ defined
by
\begin{equation}\label{eq 3.23}
\wh{c}_k^{\FL}(E, h^E, \nabla^E, \phi^E)(z)=\int_M\alpha+\int_{a(z)}s_k(\ch(\nabla^E)+d
\phi^E)\mod\Z,
\end{equation}
where $z\in Z_{2k-1}(X)$, $M$, $\alpha$ and $a(z)$ are chosen as in the proof of Proposition
\ref{prop 1}, is well defined.
\end{prop}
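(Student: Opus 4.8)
The plan is to read (\ref{eq 3.23}) as an instance of the general construction of Section \ref{3.1}, with the closed form $s_k(\ch(\nabla^E)+d\phi^E)$ taking the place of the curvature $P(\Omega)$, and then to verify two things: that for a fixed generator $(E,h^E,\nabla^E,\phi^E)$ the formula yields a well-defined natural differential character, and that the resulting assignment is constant on each Freed--Lott equivalence class, so that it descends to $\wh{K}_{\FL}(X)$.

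First I would check that the form in (\ref{eq 3.23}) is legitimate. Because $d\phi^E$ is exact, $\ch(\nabla^E)+d\phi^E$ is closed and represents the same de Rham class as $\ch(\nabla^E)$, so it lies in $\Omega^{\even}_{\BU}(X)$ and Lemma \ref{lemma 3} gives $s_k(\ch(\nabla^E)+d\phi^E)\in\Omega^{2k}_\Z(X)$. This integral form is compatible with the integral class $c_k(E)$, and since the proofs of Proposition \ref{prop 2} and of (3) of Theorem \ref{thm 1} use only closedness and integrality of the curvature form together with its naturality under pullback, they carry over verbatim to show that (\ref{eq 3.23}) defines a natural differential character with curvature $s_k(\ch(\nabla^E)+d\phi^E)$ and characteristic class $c_k(E)$, independent of the choices $M$, $\alpha$, $a(z)$. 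Finally, since only $d\phi^E$ enters, the class is unchanged under $\phi^E\mapsto\phi^E+d\mu$, hence is independent of the representative of $\phi^E\in\Omega^{\odd}(X)/d\Omega^{\even}(X)$.

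For the descent I would work with the total class $\wh{c}^{\FL}(\E):=1+\sum_{k\geq 1}\wh{c}_k^{\FL}(\E)$, where $\E=(E,h^E,\nabla^E,\phi^E)$. By the previous step and Remark \ref{remark 1}, each $\wh{c}_k^{\FL}(\E)$ is the unique natural character with curvature $s_k(\ch(\nabla^E)+d\phi^E)$ and characteristic class $c_k(E)$. The key point is that the Freed--Lott relation is arranged precisely so that $\ch(\nabla^E)+d\phi^E$ is additive: for a relation $\E_2=\E_1+\E_3$ coming from $0\to E_1\to E_2\to E_3\to 0$, the Chern--Simons correction built into $\phi_2$ cancels the discrepancy between $\ch(\nabla^{E_2})$ and $\ch(\nabla^{E_1}\oplus\nabla^{E_3})$, giving $\ch(\nabla^{E_2})+d\phi_2=(\ch(\nabla^{E_1})+d\phi_1)+(\ch(\nabla^{E_3})+d\phi_3)$. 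As the total Chern form is multiplicative under addition of Chern character forms, the total curvatures then multiply, while the Whitney formula gives $c(E_2)=c(E_1)\cup c(E_3)$. By (3) of \cite[Theorem 1.11]{CS85} the product $\wh{c}^{\FL}(\E_1)\ast\wh{c}^{\FL}(\E_3)$ is natural with exactly this curvature and characteristic class in each degree, so the degreewise uniqueness of Remark \ref{remark 1} forces $\wh{c}^{\FL}(\E_2)=\wh{c}^{\FL}(\E_1)\ast\wh{c}^{\FL}(\E_3)$.

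This multiplicativity exhibits $\wh{c}^{\FL}$ as a homomorphism from the monoid of generators under $\oplus$ into the units of $\wh{H}^{\even}(X;\R/\Z)$ with leading term $1$; since $\wh{c}^{\FL}$ of a trivial generator is $1$ by Example \ref{exam 1}, it respects the relations and extends to $\wh{K}_{\FL}(X)$, just as in Proposition \ref{prop 3} (with structured inverses as in \cite{PT13}). Reading off the degree $2k$ component gives the asserted well-definedness of $\wh{c}_k^{\FL}$. I expect the descent to be the main obstacle, and within it the cancellation of the Chern--Simons term against the defect of $\ch(\nabla^{E_2})$; this is exactly where the integrand is forced to be $s_k(\ch(\nabla^E)+d\phi^E)$, the naive candidate $i_2(\phi_{[2k-1]})$ failing to be multiplicative under this relation.
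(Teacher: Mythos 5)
Your first step, fixing a generator $(E,h^E,\nabla^E,\phi^E)$ and running the machinery of Section \ref{3.1} with $s_k(\ch(\nabla^E)+d\phi^E)$ in place of $P(\Omega)$ (with Lemma \ref{lemma 3} supplying integrality of the curvature form, and the observation that only $d\phi^E$ enters), is exactly the paper's first step, which it compresses into the sentence that one proves the theorem ``along the lines of $S_{P,u}$''. The genuine divergence is in the descent to $\wh{K}_{\FL}(X)$. The paper never touches the Freed-Lott exact-sequence relations: it checks from (\ref{eq 3.12}) and (\ref{eq 3.23}) that $\wh{c}_k(\E)=(\wh{c}_k^{\FL}\circ f)(\E)$ for generators $\E$ of $\wh{K}_{\SSSS}(X)$, where $f:\wh{K}_{\SSSS}(X)\to\wh{K}_{\FL}(X)$ is the isomorphism of \cite[Theorem 1]{H12}, and then concludes well-definedness of $\wh{c}_k^{\FL}$ from that of $f$ and of $\wh{c}_k$ (Proposition \ref{prop 3}). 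You instead verify the relations inside the Freed-Lott model itself: additivity of $\ch(\nabla)+d\phi$ under the relation, multiplicativity of the total symmetric function $s=1+s_1+\cdots$ under addition of even forms (a formal consequence of Newton's identities), the Whitney formula for the topological classes, and then Remark \ref{remark 1} together with (3) of \cite[Theorem 1.11]{CS85} to force $\wh{c}^{\FL}(\E_2)=\wh{c}^{\FL}(\E_1)\ast\wh{c}^{\FL}(\E_3)$, which is precisely the uniqueness-plus-product argument the paper uses to prove (\ref{eq 3.21}); a multiplicative map into the units of $\wh{H}^{\even}(X;\R/\Z)$ then descends formally to the quotient group. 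Your route is longer but self-contained in the Freed-Lott model (no appeal to \cite{H12}) and it addresses every generator and every relation, whereas the paper's route is shorter but verifies the compatibility $\wh{c}_k=\wh{c}_k^{\FL}\circ f$ only on generators in the image of $f$ (those with vanishing form part), delegating the rest to the isomorphism property of $f$; your argument can be read as filling in what that delegation hides, and it yields the Whitney-type product formula for $\wh{c}^{\FL}$ as a by-product. Two small repairs to your write-up: structured inverses from \cite{PT13} are not needed for the Freed-Lott descent, since $\wh{K}_{\FL}(X)$ is by definition a quotient of a free abelian group and formal inverses are built in (\cite{PT13} is what Proposition \ref{prop 3} needs for the stable-isomorphism relation defining $\wh{K}_{\SSSS}$); and the invertibility of $\wh{c}^{\FL}(\E)$ in $\wh{H}^{\even}(X;\R/\Z)$ should be justified by nilpotence of positive-degree classes, which holds because $X$ is compact of finite dimension.
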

\begin{proof}
One can prove the theorem along the lines of $S_{P, u}$. Namely, we first assume the
existence of $\wh{c}_k^{\FL}(E, h^E, \nabla^E, \phi^E)$ as a differential character with
its naturality and the compatibility with curvature to prove its uniqueness as in Proposition
\ref{prop 1}. We then get the explicit formula (\ref{eq 3.23}). Then we prove (\ref{eq 3.23})
is independent of the choices as in Proposition \ref{prop 2}, and it defines a differential
character. Then we prove the naturality and the compatibility with curvature
and with characteristic class of $\wh{c}_k^{\FL}(E, h, \nabla, \phi)$. This will imply the
uniqueness of $\wh{c}_k^{\FL}(E, h^E, \nabla^E, \phi^E)$ by Remark \ref{remark 1}.

To prove the map $\wh{c}^{\FL}_k:\wh{K}_{\FL}(X)\to\wh{H}^{2k}(X; \R/\Z)$ is well defined,
let $\E$ be a generator of $\wh{K}_{\SSSS}(X)$. By (\ref{eq 3.12}) and (\ref{eq 3.23}),
we have
$$\wh{c}_k(\E)=(\wh{c}^{\FL}_k\circ f)(\E),$$
where $f$ is given by \cite[Theorem 1]{H12}. Since $f$ and $\wh{c}_k$ are well defined,
it follows that $\wh{c}^{\FL}_k$ is well defined.
\end{proof}

In particular, analogous statements of Corollary \ref{coro 2} holds for $\wh{c}^{\FL}_k$.
From (\ref{eq 3.23}) we have $\delta_1(\wh{c}^{\FL}_k(\E))=s_k(\ch(\nabla^E)+d\phi^E)$. The
reason of choosing this term is the following. Recall that there are ring homomorphisms,
given by $\ch_{\wh{K}_{\SSSS}}(E, h, [\nabla])=\ch(\nabla)$ and $\ch_{\wh{K}_{\FL}}(E, h,
\nabla, \phi)=\ch(\nabla)+d\phi$ such that the following diagram commutes
$$\xymatrix{\wh{K}_{\SSSS}(X) \ar[r]^f \ar[dr]_{\ch_{\wh{K}_{\SSSS}}} & \wh{K}_{\FL}(X)
\ar[d]^{\ch_{\wh{K}_{\FL}}} \\ & \Omega^{\even}_{\BU}(X)}$$
where $f$ is the ring isomorphism given in \cite[Theorem 1]{H12}. Since
$$\delta_1(\wh{c}_k(E, h, [\nabla]))=c_k(\nabla)=s_k(\ch(\nabla))=s_k(\ch_{\wh{K}_{\SSSS}}
(X)),$$
it follows that we must define $\wh{c}_k^{\FL}$ so that its curvature is given by
$$s_k(\ch_{\wh{K}_{\FL}}(E, h, \nabla, \phi))=s_k(\ch(\nabla)+d\phi),$$
which implies the compatibility between differential Chern classes on Simons-Sullivan's
and Freed-Lott's models of differential $K$-theory. Thus (\ref{eq 3.23}) gives the correct
formula for differential Chern classes on the differential $K$-group defined by vector
bundles with connections and odd forms.

\subsection{Odd differential Chern classes}\label{3.4}

One can define odd differential Chern classes in a model-independent way as in
\cite[Theorem 1.2]{B09}, for which we recall here. The $(2k+1)$-th odd differential Chern
class $\wh{c}^{\odd}_{2k+1}:\wh{K}^{-1}(X)\to\wh{H}^{2k+1}(X; \R/\Z)$ is defined to be the
composition in the following diagram
\begin{equation}\label{eq 3.24}
\xymatrix{\wh{K}^{-1}(X) \ar@{.>}[rr]^{\wh{c}^{\odd}_{2k+1}} \ar[d]_S & &\wh{H}^{2k+1}(X;
\R/\Z) \\ \wh{K}(\sss^1\times X) \ar[rr]_{\wh{c}_{k+1}} & & \wh{H}^{2k+2}(\sss^1\times X;
\R/\Z) \ar[u]_{\int^{\wh{H}}_{\sss^1}}}
\end{equation}
i.e., \dis{\wh{c}^{\odd}_{2k+1}:=\int^{\wh{H}}_{\sss^1}\circ\wh{c}_{k+1}\circ S}, where
$S:\wh{K}^{-1}(X)\to\wh{K}(\sss^1\times X)$ is the suspension map and
\dis{\int^{\wh{H}}_{\sss^1}} is the integration along the fibers of $\sss^1\times X\to X$
in $\wh{H}$. The odd differential Chern classes so defined satisfy the following
commutative diagram
\cite[Theorem 1.2]{B09}
\begin{equation}\label{eq 3.25}
\begin{CD}
\wh{K}^{-1}(X) @>\wh{c}^{\odd}_{2k+1}>> \wh{H}^{2k+1}(X; \R/\Z) \\ @A\int^{\wh{K}}_{\sss^1}AA
@AA\int^{\wh{H}}_{\sss^1}A \\ \wh{K}(\sss^1\times X) @>>\wh{c}_{k+1}> \wh{H}^{2k+2}(\sss^1
\times X; \R/\Z)
\end{CD}
\end{equation}
where \dis{\int^{\wh{K}}_{\sss^1}:\wh{K}(\sss^1\times X)\to\wh{K}^{-1}(X)} is the
integration along the fibers of $\sss^1\times X\to X$ in $\wh{K}$. The proof of
(\ref{eq 3.25}) follows immediately from the definition of the integration map (see
\cite[Proposition 4.2]{BS10} and also the Appendix \ref{app A}). As in the proof of
\cite[Theorem 1.2]{B10}, $\wh{c}^{\odd}_{2k+1}$ is unique as \dis{\int^{\wh{K}}_{\sss^1}}
is surjective.

There are various models of odd differential $K$-theory \cite{BS09, FL10, TWZ13, PMSV13}.
For example, if we use the odd differential $K$-group defined in \cite{TWZ13}, which is
the odd counterpart of Simons-Sullivan differential $K$-theory, then the odd differential
Chern class defined by (\ref{eq 3.24}) is well defined by Proposition \ref{prop 3}. Denote
by $c^{\odd}_{2k+1}([g])$ the $(2k+1)$-th odd Chern class. Note that
$\delta_2(\wh{c}^{\odd}_{2k+1}([g]))=c^{\odd}_{2k+1}([g])$ by the compatibility between
\dis{\int^{\wh{H}}_{\sss^1}} with $\delta_2$ \cite[Proposition 4.2]{BS10} and the definition
of $c^{\odd}_{2k+1}([g])$.

\bibliographystyle{amsplain}
\bibliography{MBib}

\appendix
\section{A proof of (\ref{eq 3.25})}\label{app A}

For the convenience of readers we include a proof of (\ref{eq 3.25}). Note that for any
$x\in\wh{K}^{-1}(X)$, $S(x)\in\wh{K}(\sss^1\times X)$ is equal to multiplying $x$ by a
certain element in $\wh{K}^{-1}(\sss^1)$, for which we denote it by $e$.

Denote by $\wh{E}$ the differential extension of a generalized cohomology theory $E$ which
is multiplicative. Let $i:X\to\sss^1\times X$ be the inclusion map and $p:\sss^1\times X
\to X$ the projection map. Since $p\circ i=\id_X$, it follows that
$$\wh{E}(\sss^1\times X)=\im(p^*)\oplus\ker(i^*).$$
As in \cite[Proposition 4.2]{BS10}, every $x\in\ker(i^*)$ can be uniquely written as
$x=e\times y+a(\rho)$, where $y\in\wh{E}^{-1}(X)$ and \dis{\rho\in\frac{\Omega^{\odd}
(\sss^1\times X)}{\im(d)}}. Thus every $u\in\wh{E}(\sss^1\times X)$ can be written as
$$u=p^*z\oplus x=p^*z\oplus(e\times y+a(\rho)).$$
The map \dis{\int^{\wh{E}}_{\sss^1}:\wh{E}(\sss^1\times X)\to\wh{E}^{-1}(X)} is defined to
be the composition
\cdd{\wh{E}(\sss^1\times X) @>>> \ker(i^*) @>>> \wh{E}^{-1}(X)}
where the first map is the projection map. Obviously the integration map satisfies
\dis{\int^{\wh{E}}_{\sss^1}\circ p^*=0}, and is defined by
$$\int^{\wh{E}}_{\sss^1}u=y+a\bigg(\int_{\sss^1}\rho\bigg).$$
Note that
\begin{displaymath}
\begin{split}
\int^{\wh{H}}_{\sss^1}\wh{c}_{k+1}(u)&=\int^{\wh{H}}_{\sss^1}\wh{c}_{k+1}(p^*z\oplus(e\times
y+a(\rho)))\\
&=\int^{\wh{H}}_{\sss^1}p^*\wh{c}_{k+1}(z)+\int^{\wh{H}}_{\sss^1}\wh{c}_{k+1}(e\times y+a
(\rho))\\
&=\int^{\wh{H}}_{\sss^1}\wh{c}_{k+1}(e\times y+a(\rho)),
\end{split}
\end{displaymath}
and by (\ref{eq 3.24}), we have
\begin{displaymath}
\begin{split}
\wh{c}^{\odd}_{2k+1}\bigg(\int^{\wh{K}}_{\sss^1}u\bigg)&=\wh{c}^{\odd}_{2k+1}\bigg(y+a\bigg
(\int_{\sss^1}\rho\bigg)\bigg)\\
&=\int^{\wh{H}}_{\sss^1}\wh{c}_{k+1}\bigg(S\bigg(y+a\bigg(\int_{\sss^1}\rho\bigg)\bigg)\bigg)\\
&=\int^{\wh{H}}_{\sss^1}\wh{c}_{k+1}(e\times y+a(\rho)).
\end{split}
\end{displaymath}
Thus (\ref{eq 3.25}) holds.
\end{document}